\DeclareMathOperator{\fl}{fl}
\DeclareMathOperator{\op}{op}
\newcommand{\card}{\mathrm{Card}}
\newcommand{\Cov}{\mathrm{Cov}}
\crefname{hypothesis}{Hypothesis}{Hypotheses}
\title{Stochastic Rounding Variance and Probabilistic Bounds: A New Approach\thanks{Submitted to the editors 21 jul 2022.}
\funding{This research was supported by the French National Agency for Research (ANR) via the InterFLOP project (No. ANR-20-CE46-0009).}}
\author{El-Mehdi El Arar,\thanks{Université Paris-Saclay, UVSQ, Li-PaRAD, Saint-Quentin en Yvelines, France
  (\email{el-mehdi.el-arar@uvsq.fr},\email{ devan.sohier@uvsq.fr}, \email{pablo.oliveira@uvsq.fr}).}
  \and Devan Sohier\footnotemark[2]  \and Pablo de Oliveira Castro\footnotemark[2]
\and Eric Petit\thanks{Intel Corp
  (\email{eric.petit@intel.com}).}}
\newcolumntype{R}[1]{>{\raggedleft\arraybackslash }b{#1}}
\newcolumntype{L}[1]{>{\raggedright\arraybackslash }b{#1}}
\newcolumntype{C}[1]{>{\centering\arraybackslash }b{#1}}
\begin{document}

\maketitle

\begin{abstract}

Stochastic rounding (SR) offers an alternative to the deterministic IEEE-754 floating-point rounding modes. In some applications such as PDEs, ODEs, and neural networks, SR empirically improves the numerical behavior and convergence to accurate solutions while the theoretical background remains partial.
Recent works by Ipsen, Zhou, Higham, and Mary have computed SR probabilistic error bounds for basic linear algebra kernels. For example, the inner product SR probabilistic bound of the forward error is proportional to $\sqrt{n}u$ instead of $nu$ for the default rounding mode. To compute the bounds, these works show that the errors accumulated in computation form a martingale.

This paper proposes an alternative framework to characterize SR errors based on the computation of the variance. We pinpoint common error patterns in numerical algorithms and propose a lemma that bounds their variance. For each probability and through Bienaymé–Chebyshev inequality, this bound leads to better probabilistic error bound in several situations. Our method has the advantage of providing a tight probabilistic bound for all algorithms fitting our model. We show how the method can be applied to give SR error bounds for the inner product and Horner polynomial evaluation. 
\end{abstract}

\begin{keywords}
Stochastic rounding, Floating-point arithmetic, Concentration inequality, Inner product, Polynomial evaluation, Horner algorithm.
\end{keywords}

\begin{AMS}
65G50, 65F05
\end{AMS}

\section{Introduction}
Stochastic rounding (SR) is an idea proposed in the 1950s by von Neumann and Goldstine~\cite{Neumann1947NumericalIO}. First, it can be used to estimate empirically the numerical error of computer programs; SR introduces a random noise in each floating-point operation and then a statistical analysis of the set of sampled outputs can be applied to estimate the effect of rounding errors. To make this simulation available, various tools such as Verificarlo~\cite{verificarlo}, Verrou~\cite{verrou} and Cadna~\cite{cadna} have been developed. Second, SR can be used as a replacement for the default deterministic rounding mode in numerical computations. 
It has been demonstrated that in multiple domains such as neural networks, ODEs, PDEs, and Quantum mechanics~\cite{survey}, SR provides better results compared to the IEEE-754 default rounding mode~\cite{norm}. Connolly et al.~\cite{theo21stocha} show that SR successfully prevents the phenomenon of stagnation that takes place in various applications such as neural networks, ODEs and PDEs. In particular, Gupta et al show in~\cite{gupta} that deep neural networks are prone to stagnation during the training phase. For PDEs, solved via Runge-Kutta finite difference methods in low precision, SR avoids stagnation in the computations of the heat equation solution as proved in~\cite{pde}.

Hardware units proposing stochastic rounding are still unavailable in most computers.
However, it has been introduced in various specialized processors such as Graphcore IPUs~\cite{graph-c}, which supports SR for 32 bits floating point, {\it binary32}, and 16 bits floating point, {\it binary16}, or Intel neuromorphic chip Loihi~\cite{davies2018loihi} to improve the accuracy of biological neuron and synapse models. Also, AMD~\cite{amd}, NVIDIA~\cite{nvidia}, IBM~\cite{ibm1,ibm2}, and other computing companies~\cite{com1,com2,com3} own several related patents. These developments support the idea of hardware implementations using SR becoming more available in the future.

Most current hardware implements the IEEE-754 standard, that defines five rounding modes for floating-point arithmetic which are all deterministic~\cite{norm}: round to nearest ties to even (default), round to nearest ties away, round to zero, round to $+ \infty$ and round to $-\infty$.  
SR, on the other hand, is a non-deterministic rounding mode: for a number that cannot be represented exactly in the working precision, it randomly chooses the next larger or smaller floating-point number.
In the literature, several properties and results of SR have been proven. Connolly et al show in~\cite{theo21stocha} that under SR-nearness, the expected value coincides with the exact value for a large family of algorithms.

Based on the Azuma-Hoeffding inequality and the martingale theory, recent works on the inner product~\cite{ilse} show that SR probabilistic bound of the forward error is proportional to $\sqrt{n}u$ rather than $nu$ when $nu \ll 1$. Also, the martingale central limit theorem implies that under certain conditions, the error converges in distribution to a normal distribution that is characterized by its mean and variance~\cite{dacunha2012probability}. This behavior is often observed in practice. 
In this case, the number of significant digits can be estimated by $-\log(\frac{\sigma}{\rvert \mu \lvert})$ where $\sigma$ is the standard deviation (the square root of the variance) and $\mu$ is the expected value~\cite{sohier2021confidence}. However, the results of this paper don't use any of these assumptions.

Variance also allows to use several probabilistic properties, such as concentration inequalities that provide a bound on how a random variable deviates from some value (typically, its expected value)~\cite{boucheron2013concentration}. To our knowledge, the variance analysis of a SR computation has not attracted any attention in the literature.
The purpose of this paper is to further the probabilistic investigation of SR with the following contributions:
\begin{enumerate}\addtocounter{enumi}{-1}
  \item We review the works proposed by M. P. Connolly, N. J. Higham and T. Mary~\cite{theo21stocha} and I. C. F. Ipsen, H. Zhou~\cite{ilse} that show the forward error for the inner product is proportional respectively to $\sqrt{n\ln(n)}u$ and $\sqrt{n}u$ at any probability $\lambda \leq 1$ rather than to the deterministic bound of $nu$~\cite{ilse}. 
  \item Under stochastic rounding and without any additional assumption, we propose Lemma~\ref{model}, a general framework applicable to a wide class of algorithms that allows to compute a variance bound. We choose the inner product and Horner algorithms as applications. Our bound is deterministic and depends on the condition number, the problem size $n$ and the unit roundoff of the floating-point arithmetic in use $u$.
  \item We extend the method proposed in~\cite{ilse} to derive a new forward error bound of the Horner algorithm in $O(\sqrt{n}u)$. This illustrates how these tools can be applied (with some work) to any algorithm based on a fixed sequence of sum and products.
  \item We introduce a new approach to derive a probabilistic bound in $O(\sqrt{n}u)$ based on the variance calculation and Bienaymé–Chebyshev inequality. This approach gives a tighter forward error bound than the previous bounds~\cite{theo21stocha, ilse} for a probability at most $0.758$. This bound remains tight from a rank $n$ high with respect to $u$.
\end{enumerate}

Interestingly, the variance method introduces a tight probabilistic error bound in low precision. In this regard, studying algorithms under stochastic rounding in low precision, especially bfloat-16 is becoming increasingly attractive due to its higher speed and lower energy consumption. Recent works show that in various domains such as PDEs~\cite{pde}, ODEs~\cite{ode} and neural networks~\cite{gupta}, SR provides positive effects compared to the deterministic IEEE-754~\cite{norm} default rounding mode in this precision format. 

Section~\ref{sec:back} 
presents the background on floating-point arithmetic and more particularly SR-nearness, a stochastic rounding mode introduced in \cite[p.~34]{parker1997monte}, that has the important property of being unbiased. It also satisfies the mean independence property, an assumption weaker than independence yet powerful enough to yield important results by martingale theory. 

Section~\ref{sec:var} 
is articulated around Lemma~\ref{model} that bounds the variance of the numerical error for a wide class of algorithms. We apply this result to the inner product and Horner algorithms in Theorem~\ref{variance bound-inner} and Theorem~\ref{variance-bound-horner}, respectively. 

Section~\ref{sec:pbBound} 
shows that, under SR-nearness rounding, the numerical error of these two algorithms is probabilistically bounded in $O(\sqrt{n}u)$ instead of the deterministic bound in $O(nu)$. We first prove it with the Azuma–Hoeffding inequality and martingale theory: 
we analyze techniques used for the inner product in works by Higham and Mary~\cite{theo21stocha, theo19} and Ipsen and Zhou~\cite{ilse}, point the difference in these two works, and adapt them to compute the relative error of the Horner method for polynomial evaluation. We then use the  Bienaymé–Chebyshev inequality which, combined to the previous variance bound, leads to a probabilistic bound in $O(\sqrt{n}u)$. 

The probabilistic bounds above depend on three parameters: the precision $u$, the problem size $n$, and the probability $\lambda$ that a SR-nearness computation has an error greater than the bound. In Section~\ref{bound-analyze}, we analyze these probabilistic bounds, and we show that the one obtained by the Bienaymé-Chebyshev inequality is tighter in many cases; in particular, for any given $\lambda$ and $u$, there exists a problem size $n$ above which the Bienaymé–Chebyshev bound is tighter.

Numerical experiments in Section~\ref{sec:exp} illustrate the quality of these bounds on the two aforementioned algorithms and compare them to deterministic rounding.

\section{Notations and definitions}\label{sec:back}
\subsection{Notation} 
Throughout this paper, for a random variable $X$, $E(X)$ denotes its expected value, $V(X)$ denotes its variance, and $\sigma(X)$ denotes its standard deviation. The conditional expectation of $X$ given $Y$ is $\mathbb{E}[X / Y]$.

\subsection{Floating-point background}\label{sec:FP_def}

A normal floating-point number in such a format is a number $x$ for which there exists a triple $(s, m, e)$ such that $x= \pm m \times \beta^{e-p}$, where $\beta$ is the basis, $e$ is the exponent, $p$ is the working precision, and $m$ is an integer (the significand) such that $\beta^{p-1} \leq m < \beta^p$.
We only consider normal floating-point numbers; detailed information on the floating-point format most generally in use in current computer systems is defined in the IEEE-754 norm~\cite{norm}.

Let us denote $\mathcal{F}\subset \mathbb{R}$ the set of normal floating-point numbers and $x\in \mathbb{R}$. Upward rounding $\lceil x \rceil $ and downward rounding $\lfloor x \rfloor$ are defined by:
$$ \lceil x \rceil=\min\{y\in \mathcal{F} : y \geq x\},  \quad \lfloor x \rfloor=\max\{y\in \mathcal{F} : y \leq x\},
$$
by definition, $\lfloor x \rfloor  \leq x \leq \lceil x \rceil$, with equalities if and only if $x \in\mathcal{F}$.  
The floating-point approximation of a real number $x\ne 0$ is one of $\lfloor x\rfloor$ or $\lceil x\rceil$: 
\begin{equation}
    \fl(x) =x(1+\delta), \label{fl(x)} 
\end{equation}
where $\delta = \frac{ \fl(x) - x}{x}$ is the relative error: $\lvert \delta \rvert \leq \beta^{1-p}$.
In the following, we use the same notation as~\cite{theo21stocha, ilse} $u=\beta^{1-p}$. IEEE-754 mode RN (round to nearest, ties to even) has the stronger property that $\lvert \delta \rvert \leq\frac12\beta^{1-p}=\frac12u$. In many works focusing on IEEE-754 RN, $u$ is chosen instead to be $\frac12\beta^{1-p}$.

For $x, y\in\mathcal F$, the considered rounding modes verify  $\fl(x\op y)\in\{\lfloor x\op y\rfloor, \lceil x\op y\rceil\}$ for $\op\in\{+, -, *, /\}$. Moreover, for IEEE-754 rounding modes~\cite{norm} and stochastic rounding~\cite{theo21stocha} the error in one operation is bounded:
\begin{equation}
     \fl(x \op y) = (x \op y)(1+\delta), \; \lvert \delta \rvert \leq u, \label{fl(xopy)}
 \end{equation}
specifically for RN we have $\lvert \delta \rvert \leq \frac12 u$.
Let us assume that $x$ is a real that is not representable: $x\in \mathbb{R} \setminus \mathcal{F}$.
The machine-epsilon or the distance between the two floating-point numbers enclosing $x$ is
$\epsilon(x) = \lceil x \rceil - \lfloor x \rfloor = \beta^{e-p}$. Since $\beta^{p-1} \leq m < \beta^p$, then $\beta^{e-1} \leq \lvert x \rvert < \beta^e$ and
\begin{equation}\label{epsilon-bound}
    \lvert \epsilon(x) \rvert = \beta^{e-1} u
    \leq \lvert x \rvert u.
\end{equation}

The fraction of $\epsilon(x)$ rounded away, as shown in Figure~\ref{fig:theta}, is $\theta(x) = \frac{x - \lfloor x \rfloor}{\lceil x \rceil - \lfloor x \rfloor }.$

\begin{figure}
     \centering
\begin{tikzpicture}[xscale=4]
\draw (0,0) -- (1,0);
\draw[shift={(0,0)},color=black] (0pt,0pt) -- (0pt, 2pt) node[below] {$\lfloor x \rfloor$};
\draw[shift={(1,0)},color=black] (0pt,0pt) -- (0pt, 2pt) node[below] {$\lceil x \rceil$};
\draw[shift={(.3,0)},color=black] (0pt,0pt) -- (0pt, 2pt) node[below] {$x$};
\draw[shift={(.5,0)},color=black] (0pt,0pt) -- (0pt, 2pt);
\draw[|->] (0, 30pt) -- (.5, 30pt) node[above] {$\frac{1}{2}\epsilon(x)$};
\draw[|->] (0, 5pt) -- (.3, 5pt) node[above] {$\theta(x)\epsilon(x)$};
\end{tikzpicture}
    \caption{$\theta(x)$ is the fraction of $\epsilon(x)$ to be rounded away.}
    \label{fig:theta}
\end{figure}
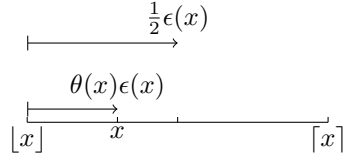

Let us denote a problem of size $n$ and precision $u$, in this paper $nu \ll 1$ means $ n \rightarrow \infty$, $ u\rightarrow 0$, and $nu \rightarrow 0$.
\subsection{Stochastic rounding definition}
Throughout this paper, $\fl(x)=\widehat x$ is the approximation of the real number $x$ under stochastic rounding. 
For $x\in \mathbb{R}\setminus \mathcal{F}$, we consider the following stochastic rounding mode, called SR-nearness:

\begin{align*}
     \fl(x) &= \left\{
     \begin{array}{cl}
        \lceil x \rceil  & \text{with probability $\theta(x)$,} \\
    \lfloor x \rfloor & \text{with probability $1-\theta(x)$.} 
          \end{array} 
          \right. 
\end{align*}
 
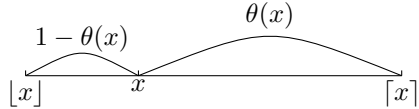
\begin{figure}
    \centering
\begin{tikzpicture}[xscale=5]
\draw (0,0) -- (1,0);
\draw[shift={(0,0)},color=black] (0pt,0pt) -- (0pt, 2pt) node[below] {$\lfloor x \rfloor$};
\draw[shift={(1,0)},color=black] (0pt,0pt) -- (0pt, 2pt) node[below] {$\lceil x \rceil$};
\draw[shift={(.3,0)},color=black] (0pt,0pt) -- (0pt, 2pt) node[below] {$x$};
\draw (0,0) .. controls (.15,.4) .. (.3,0)  (0.15,0.5) node {$1-\theta(x)$};
\draw (.3,0) .. controls (.65,.7) .. (1,0) (0.65,0.8) node {$\theta(x)$} ;
\end{tikzpicture}
    \caption{\textbf{SR-nearness}.}
\end{figure}
 
Contrary to other stochastic rounding modes~\cite{el2022positive}, SR-nearness mode is unbiased~\cite[p.~34]{parker1997monte}.
\begin{align*}
    E(\widehat x) &= \theta(x)\lceil x \rceil +(1-\theta(x))\lfloor x \rfloor \\
    &=  \theta(x)(\lceil x \rceil - \lfloor x \rfloor) + \lfloor x \rfloor =x.
\end{align*}
In the following, we focus on this stochastic rounding mode. In general and under SR-nearness, the error terms in algorithms appear as a sequence of random variables such that the independence property does not hold. However, a weaker, and yet fruitful, assumption, called mean independence, does.

\begin{defn}
A random variable $Y$ is said to be mean independent from random variable $X$ if its conditional mean $\mathbb{E}[Y / X]=\mathbb{E}(Y)$. The random sequence $(X_1, X_2, \ldots)$ is mean independent if $\mathbb{E}[X_k / X_1, . . . , X_{k-1}] = \mathbb{E}(X_k)$ for all $k$.
\end{defn}

\begin{pro}
Let $X$ and $Y$ be two real random variables:
\begin{enumerate}
    \item If $X$ and $Y$ are independent then $X$ is mean independent from $Y$.
    \item If $X$ is mean independent from $Y$ then $X$ and $Y$ are uncorrelated. 
\end{enumerate} 
The reciprocals of these two implications are false.
\end{pro}

For $a, b\in \mathcal{F}$, and $c \leftarrow a \op b$ the result of an elementary operation $\op \in \{+, -, *, /\}$ obtained from SR-nearness, the relative error $\delta$, such that
$ \widehat{c}= (a \op b)(1+\delta)
$, 
is a random variable verifying $\mathbb{E}(\delta)=0$ and $\lvert \delta\rvert\leq u$.

The following lemma has been proven in~\cite[Lem 5.2]{theo21stocha} and shows that SR-nearness satisfies the property of mean independence.

\begin{lemma}\label{meanindp}
Consider a sequence of elementary operations $c_k \leftarrow a_k \op_k b_k$, with $\delta_k$ the error of their $k$th operation, that is to say, $\widehat c_k = (\widehat a_k \op_k \widehat b_k) (1+\delta_k)$. 
The $\delta_k$ are random variables with mean zero such that $\mathbb{E}[\delta_k /  \delta_1,\ldots , \delta_{k-1}] = \mathbb{E}(\delta_k)= 0$.
\end{lemma}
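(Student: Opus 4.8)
The plan is to exploit the layered structure of a fixed sequence of operations: the quantity rounded at step $k$ is already determined once the earlier errors are fixed, so the only fresh randomness contributing to $\delta_k$ is the independent up/down draw of SR-nearness, whose unbiasedness was established above. I would make this precise through the tower property of conditional expectation.

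First I would observe that, for a fixed algorithm, the operands $\widehat a_k$ and $\widehat b_k$ are deterministic functions of the inputs and of the earlier rounding errors $\delta_1, \ldots, \delta_{k-1}$: each intermediate result is obtained from previously computed quantities together with their associated errors. Consequently the exact pre-rounding value $z_k := \widehat a_k \op_k \widehat b_k$ is measurable with respect to the information carried by $\delta_1, \ldots, \delta_{k-1}$, so that conditioning on $\delta_1, \ldots, \delta_{k-1}$ fixes $z_k$. Next I would use that the rounding performed at step $k$ draws its own independent randomness (the SR-nearness coin), so that, conditionally on $z_k$, we have $\widehat c_k = \lceil z_k \rceil$ with probability $\theta(z_k)$ and $\widehat c_k = \lfloor z_k \rfloor$ with probability $1 - \theta(z_k)$, irrespective of the particular past that produced $z_k$. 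The unbiasedness computation $E(\widehat x) = x$ recalled earlier then gives $\mathbb{E}[\widehat c_k / z_k] = z_k$, whence, writing $\delta_k = (\widehat c_k - z_k)/z_k$,
\[
\mathbb{E}[\delta_k / \delta_1, \ldots, \delta_{k-1}] = \frac{\mathbb{E}[\widehat c_k / z_k] - z_k}{z_k} = 0 .
\]

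Finally, taking expectations and using the tower property yields $\mathbb{E}(\delta_k) = \mathbb{E}\bigl(\mathbb{E}[\delta_k / \delta_1, \ldots, \delta_{k-1}]\bigr) = 0$, which establishes both equalities in the statement simultaneously. (The degenerate cases $z_k \in \mathcal{F}$ or $z_k = 0$ give $\delta_k = 0$ deterministically and need no separate argument.)

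The main obstacle I expect is not computational but a matter of rigor in the probabilistic setup: one must justify carefully that $z_k$ is indeed a function of $\delta_1, \ldots, \delta_{k-1}$ alone and that the step-$k$ coin is independent of the entire past, so that conditioning on the vector $(\delta_1, \ldots, \delta_{k-1})$ is equivalent to conditioning on $z_k$. Once the probability space is modeled with one independent uniform variable per elementary operation, this reduction is precisely what propagates the unbiasedness of a single SR-nearness draw to the conditional statement, and the result follows without any assumption of full independence among the $\delta_k$.
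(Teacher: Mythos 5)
Your proof is correct and takes essentially the same route as the justification the paper relies on: the paper does not prove this lemma itself but cites \cite[Lem.~5.2]{theo21stocha}, whose argument is exactly yours — for a fixed sequence of operations the pre-rounding value $z_k = \widehat a_k \op_k \widehat b_k$ is a deterministic function of $\delta_1,\ldots,\delta_{k-1}$, the step-$k$ SR-nearness draw is independent of the past, and the unbiasedness of a single rounding then gives $\mathbb{E}[\delta_k / \delta_1,\ldots,\delta_{k-1}]=0$, with the tower property yielding $\mathbb{E}(\delta_k)=0$. Your closing remark on rigor (modeling one independent uniform variable per operation so that conditioning on the past reduces to conditioning on $z_k$) is precisely the point that makes the argument airtight, so nothing essential is missing.
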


\section{The variance of the error for stochastic rounding}
\label{sec:var}
We now turn to bound the variance of the error in computation.
If $\widehat x =x(1+\delta)$ is the result of an elementary operation rounded with SR-nearness, then 
$E(\widehat x)= x$ and
\begin{align*}
 V(\widehat x) &= E(\widehat x^2) - x^2 = \lceil x \rceil^2 \theta(x) + \lfloor x \rfloor^2 (1-\theta(x)) - x^2 \\
 &= \theta(x) (\lceil x \rceil^2 - \lfloor x \rfloor^2) - (x^2 - \lfloor x \rfloor^2)\\
 &= \theta(x) \epsilon(x) (\lceil x \rceil + \lfloor x \rfloor) - \theta(x) \epsilon(x)(x+ \lfloor x \rfloor)\\
 &= \theta(x) \epsilon(x) (\lceil x \rceil - x )\\
 &= \epsilon(x)^2 \theta(x) (1-\theta(x)).
\end{align*}
Using~(\ref{epsilon-bound}) leads to $V(\widehat x)\leq x^2 \frac{u^2}{4}$, in particular $V(\widehat x)\leq x^2 u^2$. Lemma~\ref{model} below allows to estimate the variance of the accumulated errors in a sequence of additions and multiplications. 
Let $K$ a subset of $\mathbb{N}$ of cardinal $n$. Assume that $\delta_1, \delta_2,...$ in that order are random errors on elementary operations obtained from SR-nearness. Let us denote 
$$ \psi_{K} = \prod_{k\in K} (1+\delta_k).
$$

Since $\lvert \delta_k \rvert \leq u$ for all $k \in K$  we have $\lvert \psi_{K} \rvert \leq (1+u)^n.$
Throughout this paper, let $\gamma_n(u)= (1+u)^{n}-1$ and $K\triangle K' = (K\cup K') \setminus (K\cap K')$. The following lemma gives some properties of $\psi$ that allow to bound the variance of errors in an algorithm consisting in a fixed sequence of sums and products.

\begin{lemma}\label{model}
Under SR-nearness $\psi_{K}$ satisfies 
\begin{enumerate}
    \item $E(\psi_{K}) = 1$.
    \item Let $K' \subset \mathbb{N}$ such that $\lvert K\cap K'\rvert = m$, under the assumption that $\forall~j\in~K\triangle~K'$, $k\in K\cap K'$, with $j<k$ we have
    $$  0 \leq \Cov(\psi_{K},\psi_{K'})  \leq \gamma_{m}(u^2). 
    $$  
    \item $V(\psi_K) \leq \gamma_n(u^2) $,
\end{enumerate}
where $\gamma_n(u^2)= (1+u^2)^{n}-1 \approx \exp{(nu^2)} -1 = nu^2 + O(u^4).$
\end{lemma}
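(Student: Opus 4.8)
The plan is to treat parts 1 and 3 as consequences of the machinery built for part 2, so that the real work lies in the covariance bound. Throughout I would enumerate the indices of $K$, $K'$ and $K\cap K'$ in increasing order and exploit the mean independence of Lemma~\ref{meanindp} through a ``peeling'' argument: condition on all errors of smaller index, evaluate the expectation of the single surviving highest-index factor, and recurse. Writing $\mathcal F_{k-1}$ for the information carried by $\delta_1,\dots,\delta_{k-1}$, the two facts I would use repeatedly are $\mathbb E[\delta_k / \mathcal F_{k-1}]=0$ (mean independence) and $0\le \mathbb E[\delta_k^2 / \mathcal F_{k-1}]\le u^2$ (from $|\delta_k|\le u$). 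For part 1, with $K=\{k_1<\dots<k_n\}$ the factor $(1+\delta_{k_n})$ has the largest index, so $\prod_{i<n}(1+\delta_{k_i})$ is $\mathcal F_{k_n-1}$-measurable; conditioning gives $\mathbb E[(1+\delta_{k_n})/\mathcal F_{k_n-1}]=1$, and an induction on $n$ yields $E(\psi_K)=1$.

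For part 2, I would first rewrite
\[
\psi_K\psi_{K'}=\prod_{k\in K\cap K'}(1+\delta_k)^2\ \prod_{j\in K\triangle K'}(1+\delta_j),
\]
so that $\Cov(\psi_K,\psi_{K'})=E(\psi_K\psi_{K'})-1$. The ordering hypothesis, that every $j\in K\triangle K'$ precedes every $k\in K\cap K'$, is exactly what makes the intersection factors the highest-index ones, so they can be peeled off one at a time from the top. Enumerating $K\cap K'=\{k_1<\dots<k_m\}$ and conditioning on $\mathcal F_{k_m-1}$, the surviving factor satisfies $\mathbb E[(1+\delta_{k_m})^2/\mathcal F_{k_m-1}]=1+\mathbb E[\delta_{k_m}^2/\mathcal F_{k_m-1}]=:1+v_{k_m}$ with $0\le v_{k_m}\le u^2$, the linear term vanishing by mean independence. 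The crucial structural observation is that the leftover product $Q=\prod_{k\in K\cap K',\,k\ne k_m}(1+\delta_k)^2\prod_{j}(1+\delta_j)$ is both nonnegative (squares are nonnegative and $1+\delta_j>0$ since $|\delta_j|\le u<1$) and $\mathcal F_{k_m-1}$-measurable. Hence $E(\psi_K\psi_{K'})=E(Q)+E(Q\,v_{k_m})$ is sandwiched between $E(Q)$ and $(1+u^2)E(Q)$.

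Iterating this sandwich on the number of squared factors down to the base case $m=0$, where $Q$ reduces to $\psi_{K\triangle K'}$ with $E(\psi_{K\triangle K'})=1$ by part 1, gives on the lower side $E(\psi_K\psi_{K'})\ge 1$ and on the upper side $E(\psi_K\psi_{K'})\le(1+u^2)^m$; subtracting $1$ yields $0\le\Cov(\psi_K,\psi_{K'})\le\gamma_m(u^2)$. Part 3 is then immediate by taking $K'=K$: here $K\cap K'=K$ has $m=n$ elements and $K\triangle K'=\emptyset$, so the ordering hypothesis holds vacuously and $V(\psi_K)=\Cov(\psi_K,\psi_K)\le\gamma_n(u^2)$.

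The step I expect to be delicate is justifying the peeling rigorously: at each stage I must condition on $\mathcal F_{k-1}$ for the correct $k$ and check that the surviving product is genuinely $\mathcal F_{k-1}$-measurable, which is precisely where the ordering assumption is indispensable. Without it, an intersection factor could be interleaved with larger-index symmetric-difference factors, the leftover product would fail to be measurable with respect to the conditioning, and the clean $[1,\,1+u^2]$ sandwich would break. I would also take care that only mean independence (not full independence) is invoked, and only for the vanishing of the linear terms, while the quadratic terms are controlled purely by the deterministic bound $|\delta_k|\le u$.
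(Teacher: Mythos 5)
Your proposal is correct and follows essentially the same route as the paper: both express $\psi_K\psi_{K'}$ as the product of the squared intersection factors and the symmetric-difference factors, peel off the highest-index factor $(1+\delta_{k_m})^2$ by conditioning on all lower-index errors (which is where the ordering hypothesis and the nonnegativity of the leftover product are used), invoke mean independence to annihilate the linear term and $\lvert\delta\rvert\le u$ to sandwich the quadratic term in $[1,1+u^2]$, and then induct on $m$ down to the base case given by point~1, with point~3 obtained as the special case $K'=K$. The only cosmetic difference is that you prove $E(\psi_K)=1$ directly by the same peeling induction, whereas the paper cites the corresponding lemma of Connolly, Higham and Mary.
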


\begin{proof}
The first point is an immediate consequence of~\cite[lem 6.1]{theo21stocha}. The third point is a particular case of the second with $K=K'$. Let us prove point $2$. 
$$\Cov(\psi_{K},\psi_{K'}) = E(\psi_{K} \psi_{K'}) -E(\psi_{K})E(\psi_{K'}) = E(\psi_{K} \psi_{K'}) -1.
$$
Assume that $K\cap K' = \{k_1,...,k_m\}$. Let us denote
$$Q_m :=\psi_{K} \psi_{K'} =  \prod_{j\in K\triangle K'} (1+\delta_j) \prod_{l=k_1}^{k_m} (1+\delta_l)^2, $$
such that $j < k_i$ for all $j\in K\triangle K'$ and $i \in\{1,...,m\}$.

We prove by induction over $m$ that $1 \leq E(Q_m ) \leq (1+u^2)^m$. For $m=0$, we have $K\cap K'=\emptyset$ and $Q_0 = \prod_{j\in K\triangle K'} (1+\delta_j)$, from the first point $E(Q_0)=1$.
Assume that the inequality holds for $Q_{m-1}$.
\begin{align*}
Q_m &= (1+\delta_{k_m})^2\prod_{l =k_1}^{k_{m-1}} (1+\delta_l)^2 \prod_{j\in K\triangle K'} (1+\delta_j) = (1+\delta_{k_m})^2 Q_{m-1}.
\end{align*}
Let us denote $ \mathcal{S}_{K\triangle K'}=\{ \delta_j, \ j\in K\triangle K')\}$, using the law of total expectation $E(X)= E(E[X/Y])$ and lemma~\ref{meanindp} we have
\begin{align*}
E(Q_m) &= E\big(  (1+\delta_{k_m})^2 Q_{m-1} \big) = E\big(  E[ (1+\delta_{k_m})^2 Q_{m-1}/ \mathcal{S}_{K\triangle K'}, \delta_{k_1},...,\delta_{k_{m-1}}] \big)\\
    &= E\big( Q_{m-1}  E[(1+\delta_{k_m})^2/ \mathcal{S}_{K\triangle K'}, \delta_{k_1},...,\delta_{k_{m-1}}]\big)\\
     &=  E\big(  Q_{m-1} E[1+\delta_{k_m}^2/\mathcal{S}_{K\triangle K'}, \delta_{k_1},...,\delta_{k_{m-1}}]\big).
\end{align*}
Since $\lvert \delta_{k_m}\rvert \leq u$, we have
\begin{align*}
E(Q_{m-1}) \leq E\big(Q_{m-1} E[1+\delta_{k_m}^2/\mathcal{S}_{K\triangle K'}, \delta_{k_1},...,\delta_{k_{m-1}}]\big) \leq E\big(Q_{m-1}(1+u^2)\big).
\end{align*}
Thus, $1 \leq E\big(  Q_{m}\big) \leq (1+u^2)^m.$ Finally, by induction, the claim is proven
\begin{align*}
    0\leq E\big(  Q_{m}\big) -1 =   \Cov(\psi_{K},\psi_{K'})  \leq \gamma_m(u^2). 
\end{align*}

\end{proof}

Under SR-nearness, Lemma~\ref{model} can now be used to derive a variance bound for many algorithms, such as inner products, matrix-vector and matrix-matrix products, solutions of triangular systems, and the Horner algorithm. In the following, we chose the inner product and Horner algorithms as applications.

\subsection{Inner product}

Consider the inner product $s_n = y = a_1 b_1 + \ldots + a_n b_n$, evaluated from left to right, i.e, $s_i = s_{i-1} + a_i b_i$, starting with $s_1 = a_1b_1$. 
In this paper, we address the sequential method which has a deterministic bound proportional to $nu$. However, the accumulator implementation of the inner product using a binary tree leads to a deterministic error bound in $O(\ln{(n)}u)$.

Let $\delta_0=0$, the computed $\widehat s_i$ satisfies $\widehat s_1 =a_1b_1(1+\delta_1)$ and
$$ \widehat s_i =(\widehat s_{i-1} +a_ib_i(1+\delta_{2i-2}))(1+\delta_{2i-1}), \quad \lvert \delta_{2i-2} \rvert, \lvert \delta_{2i-1} \rvert \leq u,
$$
for all $2\leq i \leq n$, where $\delta_{2i-2}$ and $\delta_{2i-1}$ represent the rounding errors from the products and additions, respectively. We thus have
\begin{equation*}
     \widehat y = \widehat s_n = \sum_{i=1}^{n} a_ib_i(1+\delta_{2i-2}) \prod_{k=i}^n (1+\delta_{2k-1}).
\end{equation*}

\begin{theorem}\label{variance bound-inner}
Under SR-nearness, the computed $\widehat y$ satisfies $ E(\widehat y)=y$ and
\begin{equation}\label{innervar}
    V(\widehat y) \leq y^2  \mathcal{K}_1^2 \gamma_n(u^2),
\end{equation}

where $\mathcal{K}_1 =\frac{\sum_{i=1}^{n} \lvert a_ib_i \rvert}{\lvert \sum_{i=1}^{n}  a_ib_i \rvert}$ is the condition number using the 1-norm for the computed  $y=\sum_{i=1}^n a_ib_i$. 
\end{theorem}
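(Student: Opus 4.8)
The plan is to read off from the explicit formula
$\widehat y = \sum_{i=1}^{n} a_ib_i(1+\delta_{2i-2}) \prod_{k=i}^n (1+\delta_{2k-1})$
that each summand is governed by a single factor of the form $\psi_K$ from Lemma~\ref{model}. First I would define, for each $i$, the index set $K_i = \{2i-2\} \cup \{2k-1 : i \le k \le n\}$ (with the convention $\delta_0 = 0$ discarding the index $0$ when $i=1$), so that $\widehat y = \sum_{i=1}^n a_i b_i \, \psi_{K_i}$. Since the coefficients $a_i b_i$ are deterministic, point 1 of Lemma~\ref{model} gives $E(\psi_{K_i}) = 1$ for every $i$, whence $E(\widehat y) = \sum_i a_i b_i = y$ at once.

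For the variance, the key move is to expand by bilinearity: $V(\widehat y) = \sum_{i,j} a_i b_i a_j b_j \, \Cov(\psi_{K_i}, \psi_{K_j})$. The heart of the argument is then to apply point 2 of Lemma~\ref{model} to each pair $(i,j)$, which forces me to check its ordering hypothesis. Taking $i \le j$, I would observe that the shared errors are exactly the tail of addition errors $K_i \cap K_j = \{2k-1 : j \le k \le n\}$, while the symmetric difference $K_i \triangle K_j$ collects the two product errors $2i-2,\ 2j-2$ together with the earlier addition errors $\{2i-1, \ldots, 2j-3\}$. Because the largest index in $K_i \triangle K_j$ is $2j-2$ and the smallest index in $K_i \cap K_j$ is $2j-1$, every element of the symmetric difference precedes every element of the intersection, so the hypothesis of point 2 is satisfied. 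This is the step I expect to be the main obstacle: it is a bookkeeping verification that hinges entirely on the left-to-right evaluation order, which is precisely what forces the common factors to be the later, higher-index operations.

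With the hypothesis in hand, point 2 yields $0 \le \Cov(\psi_{K_i}, \psi_{K_j}) \le \gamma_{m_{ij}}(u^2)$ with $m_{ij} = |K_i \cap K_j| \le n$, and since $\gamma_m(u^2)$ is increasing in $m$, each covariance is bounded by $\gamma_n(u^2)$ (the diagonal terms $i=j$ being covered directly by point 3). The final step handles the sign issue: although $a_i b_i a_j b_j$ may be negative, the non-negativity of the covariance lets me bound $a_i b_i a_j b_j \, \Cov(\psi_{K_i}, \psi_{K_j}) \le |a_i b_i|\,|a_j b_j| \, \gamma_n(u^2)$ termwise, so that $V(\widehat y) \le \gamma_n(u^2)\,\big(\sum_i |a_i b_i|\big)^2$. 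Substituting $\sum_i |a_i b_i| = \mathcal{K}_1 |y|$ from the definition of the condition number then gives the claimed bound $V(\widehat y) \le y^2 \mathcal{K}_1^2 \gamma_n(u^2)$.
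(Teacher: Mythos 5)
Your proof is correct, but it routes the variance step through a different part of Lemma~\ref{model} than the paper does. You expand $V(\widehat y)$ by bilinearity into the full double sum $\sum_{i,j} a_ib_i\,a_jb_j\,\Cov(\psi_{K_i},\psi_{K_j})$ and invoke point~2 of the lemma, which forces the ordering verification; your bookkeeping is right (for $i\le j$, $K_i\cap K_j=\{2k-1 : j\le k\le n\}$, the largest index of $K_i\triangle K_j$ is $2j-2$, and the smallest of the intersection is $2j-1$), and your handling of possibly negative weights via $\Cov(\psi_{K_i},\psi_{K_j})\ge 0$ is sound. The paper never uses point~2 here: it bounds $V(\widehat y)\le\bigl(\sum_i\lvert a_ib_i\rvert\,\sigma(\psi_{K_i})\bigr)^2$ by subadditivity of the standard deviation (i.e.\ Cauchy--Schwarz on each covariance), applies only point~3 to each $V(\psi_{K_i})$, and coarsens every $\gamma_{\cdot}(u^2)$ to $\gamma_n(u^2)$. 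The paper's route is shorter and needs no ordering check, since Cauchy--Schwarz bounds each covariance irrespective of how the index sets interleave; your route exploits the mean-independence structure to get $\Cov(\psi_{K_i},\psi_{K_j})\le\gamma_{n-j+1}(u^2)$ for $i\le j$, which is termwise sharper than the Cauchy--Schwarz bound $\sigma(\psi_{K_i})\sigma(\psi_{K_j})$ and would give a slightly tighter intermediate estimate had you not also coarsened to $\gamma_n(u^2)$; after coarsening, the two arguments land on the identical bound $y^2\mathcal{K}_1^2\gamma_n(u^2)$. As a side remark, the paper's proof computes $\card(K_j\cap K_i)=n-j+1$ and then never uses it in its chain of inequalities — that computation is precisely the one your argument actually requires.
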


\begin{proof}

For all $1 \leq i \leq n$, we have
$$ \widehat y = \sum_{i=1}^{n} a_ib_i(1+\delta_{2i-2}) \prod_{k=i}^n (1+\delta_{2k-1}) = \sum_{i=1}^{n} a_ib_i \psi_{K_i},
$$ 
with $K_i = \{2i-2, 2i-1, 2i+1,\ldots,2n-1\}$. 
Lemma~\ref{model} shows that $E(\psi_{K_i}) = 1$ for all $1\leq i \leq n$, hence
$$ E(\widehat y)= E\big(\sum_{i=1}^{n} a_ib_i \psi_{K_i} \big)= \sum_{i=1}^{n} a_ib_i E(\psi_{K_i})= y.
$$ 
For all $1\leq i < j \leq n$, 
$K_j  \cap K_i =  \{2j-1, 2j+1,...,2n-1\}$ and $\card(K_j  \cap K_i) = n-j +1$.
\begin{align*}
V(\widehat y) &= V\big( \sum_{i=1}^{n} a_ib_i \psi_{K_i} \big)\\
& \leq \left( \sum_{i=1}^{n}  \lvert a_ib_i \rvert \sqrt{V(\psi_{K_i})} \right)^2   && \text{since $\sigma(X+Y) \leq \sigma(X) + \sigma(Y)$} \\
&\leq  \left( \sum_{i=1}^{n}  \lvert a_ib_i \rvert \sqrt{ \gamma_{n-i+1}(u^2)} \right)^2 && \text{by Lemma~\ref{model}} \\
&\leq \gamma_n(u^2) (\sum_{i=1}^{n} \lvert a_ib_i \rvert)^2 && \text{since $\gamma_{n-i+1}(u^2) \leq \gamma_{n}(u^2)$} \\
&=  y^2 \mathcal{K}_1^2 \gamma_n(u^2).
\end{align*}

\end{proof}

\begin{remark}
Because $E(\widehat y) = y$, under a normality assumption, the number of significant bits can be lower-bounded by
\begin{align*}
   -\log_{2}\left( \frac{\sigma(\widehat y)}{\lvert E(\widehat y) \rvert}\right) & \geq  -\log_{2}\left( \mathcal{K}_1 \sqrt{\gamma_n(u^2)}\right) \approx -\log_{2} (\mathcal{K}_1) - \log_{2}(u) -\frac12 \log_{2}(n).
\end{align*}
\end{remark}

\subsection{Horner algorithm}
Horner algorithm is an efficient way of evaluating polynomials. When performed in floating-point arithmetic, this algorithm may suffer from catastrophic cancellations and yield a computed value less accurate than expected.

\begin{mode}\label{mod}
Let $P(x) = \sum_{i=0}^n a_i x^i$, Horner rule consists in writing this polynomial as 
$$P(x)= (((a_nx +a_{n-1})x +a_{n-2})x \ldots +a_1)x +a_0.
$$

We define by induction the following sequence \\

\begin{center}
   {\renewcommand{\arraystretch}{1.3}

    \begin{tabular}{|C{2.5cm}||L{5.5cm}|L{3.5cm}|}
\hline Operation & Floating-point arithmetic & Exact computation  \\
\hline    & $\widehat r_0 = a_n $ &  $r_0 = a_n $  \\
         $* $    &  $\widehat{r}_{2k-1}=\widehat{r}_{2k-2} x (1+\delta_{2k-1}) $ &  $r_{2k-1} = r_{2k-2}x $\\
          $+ $   &  $\widehat{r}_{2k} = (\widehat r_{2k-1} +a_{n-k})(1+\delta_{2k}) $ &  $r_{2k} = r_{2k-1} +a_{n-k} $\\
\hline Output &  $\widehat{r}_{2n}=\widehat{P}(x) $ &  $r_{2n}= P(x) $\\
\hline

\end{tabular}
}
\end{center}
          
\end{mode}
for all $1 \leq k \leq n$, with $\delta_{2k-1}$ and $\delta_{2k}$ the rounding errors from the products and the additions respectively. Let $\delta_0 =0$, we thus have
$$ \widehat r_{2n} = \sum_{i=0}^{n}a_i x^i \prod_{k=2(n -i)}^{2n} (1+\delta_k).
$$

\begin{theorem}\label{variance-bound-horner}
Using SR-nearness, the computed $\widehat r_{2n}$ satisfies $E(\widehat r_{2n}) = r_{2n}$ and
\begin{equation}\label{hor-var}
         V(\widehat r_{2n}) \leq r_{2n}^2 \mathcal{K}_1^2 \gamma_{2n}(u^2),
\end{equation}
where $\mathcal{K}_1 =\frac{\sum_{i=0}^{n} \lvert a_i x^i \rvert}{\lvert \sum_{i=0}^{n}  a_i x^i \rvert}$ is the condition number using the 1-norm for the computed  $P(x)=\sum_{i=0}^n a_i x^i$. 
\end{theorem}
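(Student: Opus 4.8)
The plan is to follow the same strategy as the proof of Theorem~\ref{variance bound-inner}, recasting the Horner recurrence into the $\psi_K$ formalism of Lemma~\ref{model}. Reading off the displayed expansion $\widehat r_{2n} = \sum_{i=0}^{n} a_i x^i \prod_{k=2(n-i)}^{2n}(1+\delta_k)$, I would set $K_i = \{2(n-i), 2(n-i)+1, \ldots, 2n\}$ so that $\widehat r_{2n} = \sum_{i=0}^{n} a_i x^i \psi_{K_i}$. The mean is then immediate: by point~1 of Lemma~\ref{model}, $E(\psi_{K_i}) = 1$ for every $i$, hence $E(\widehat r_{2n}) = \sum_{i=0}^{n} a_i x^i = r_{2n}$ by linearity of expectation.

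For the variance, the plan is to apply the triangle inequality for the $L^2$-norm, $\sigma(X+Y) \leq \sigma(X) + \sigma(Y)$, to pass from $\sigma(\widehat r_{2n})$ to $\sum_{i=0}^{n} |a_i x^i|\,\sigma(\psi_{K_i})$, and then to bound each $\sigma(\psi_{K_i}) = \sqrt{V(\psi_{K_i})}$ using point~3 of Lemma~\ref{model}, which gives $V(\psi_{K_i}) \leq \gamma_{|K_i|}(u^2)$. Since $\gamma$ is increasing in its subscript, I would replace every factor by the largest admissible one, square, and recognize that $(\sum_{i=0}^{n} |a_i x^i|)^2 = \mathcal{K}_1^2\, r_{2n}^2$, which closes the estimate.

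The one point that needs care — and what I expect to be the main obstacle — is the cardinality bookkeeping, specifically pinning down the governing exponent as $2n$ rather than $2n+1$. Naively $|K_i| = 2i+1$, so the $i=n$ term would suggest $\gamma_{2n+1}(u^2)$. However, $K_n = \{0,1,\ldots,2n\}$ contains the index $0$, and by the convention $\delta_0 = 0$ this factor contributes $1$ to the product and nothing to its variance; thus $\psi_{K_n}$ effectively involves only the $2n$ genuine errors $\delta_1,\ldots,\delta_{2n}$ and satisfies $V(\psi_{K_n}) \leq \gamma_{2n}(u^2)$. For $i < n$ the smallest index $2(n-i)$ is at least $2$, so $|K_i| = 2i+1 \leq 2n-1$ and $\gamma_{|K_i|}(u^2) \leq \gamma_{2n}(u^2)$ as well. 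Every term is therefore controlled by $\gamma_{2n}(u^2)$, yielding $V(\widehat r_{2n}) \leq \gamma_{2n}(u^2)\,(\sum_{i=0}^{n} |a_i x^i|)^2 = r_{2n}^2\,\mathcal{K}_1^2\,\gamma_{2n}(u^2)$. I would make the $\delta_0 = 0$ reduction explicit at the moment of invoking Lemma~\ref{model} to avoid an off-by-one in the final exponent.
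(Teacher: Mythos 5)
Your proposal is correct and follows essentially the same route as the paper: the same decomposition $\widehat r_{2n}=\sum_{i=0}^n a_i x^i \psi_{K_i}$, point~1 of Lemma~\ref{model} for the mean, and the $\sigma(X+Y)\leq\sigma(X)+\sigma(Y)$ inequality plus monotonicity of $\gamma$ for the variance. In fact your cardinality bookkeeping is slightly more careful than the paper's, which writes the intermediate bound as $\gamma_{2i}(u^2)$ (a harmless off-by-one for $i<n$, where the correct count is $2i+1$) and invokes $\delta_0=0$ only implicitly for the $i=n$ term; both arguments close identically via $\gamma_{2n}(u^2)$.
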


\begin{proof}
We have
$$\widehat r_{2n} = \sum_{i=0}^{n}a_i x^i \prod_{k=2(n -i)}^{2n} (1+\delta_k)
 =  \sum_{i=0}^{n}a_i x^i \psi_{K_i},$$ 
with $K_i =\{ 2(n-i), 2(n-i)+1,...,2n\}$ for all $0 \leq i \leq n$.
Lemma~\ref{model} implies $E(\psi_{K_i}) = 1$, then
$E(\widehat r_{2n}) = E\big( \sum_{i=0}^{n}a_i x^i \psi_{K_i}\big)
    = \sum_{i=0}^{n}a_i x^i E(\psi_{K_i}) = r_{2n}.$
Therefore, because $\delta_0 =0$ we have
   \begin{align*}
       V(\widehat r_{2n}) &= V\big(\sum_{i=0}^{n}a_i x^i \psi_{K_i} \big) \\
       & \leq  \left( \sum_{i=0}^{n}  \lvert a_i x^i \rvert \sqrt{V(\psi_{K_i})} \right)^2   && \text{since $\sigma(X+Y) \leq \sigma(X) + \sigma(Y)$} \\
       &\leq  \left( \sum_{i=0}^{n}  \lvert a_i x^i \rvert \sqrt{ \gamma_{2i}(u^2)} \right)^2 && \text{by Lemma~\ref{model}} \\
       &\leq \gamma_{2n}(u^2) (\sum_{i=0}^{n} \lvert a_i x^i \rvert)^2 && \text{since $\gamma_{2i}(u^2) \leq \gamma_{2n}(u^2)$} \\
       &= r_{2n}^2 \mathcal{K}_1^2 \gamma_{2n}(u^2).
   \end{align*}

\end{proof}

\begin{remark}
Because $E(\widehat r_{2n}) = r_{2n}$, under a normality assumption, the number of significant bits can be lower-bounded by
\begin{align*}
    -\log_{2}\left( \frac{\sigma(\widehat r_{2n})}{\lvert E(\widehat r_{2n}) \rvert}\right) & \geq  -\log_{2}\left( \mathcal{K}_1 \sqrt{\gamma_{2n}(u^2)}\right)\\
    &\approx -\log_{2} (\mathcal{K}_1) - \log_{2}(u) -\frac12 \log_{2}(2n).
\end{align*}
\end{remark}

\section{Probabilistic bounds of the error for stochastic rounding}
\label{sec:pbBound}

Based on the independence assumption, Higham and Mary~\cite{theo19} have shown that for the inner product,  a probabilistic bound of the error proportional to $\sqrt{n\ln{(n)}}u$ can be achieved rather than the deterministic bound in $O(nu)$. With Connelly, they show in~\cite{theo21stocha} that this bound always holds for SR-nearness due to mean independence of errors.

We start with the approaches based on the Azuma-Hoeffding inequality and the martingale property (AH1 and AH2 methods in the following). In this context, firstly, we give a rigorous review of the previous results of the inner product forward error by Higham and Mary~\cite{theo21stocha} and Ilse, Ipsen, and Zhou~\cite{ilse}. Secondly, we extend these techniques to the Horner algorithm, which also gives a probabilistic bound proportional to $\sqrt{n}u$. 

Then, we present a new approach based on Bienaymé–Chebyshev inequality and the previous variance estimations (BC method in the following), our bound is also in $O(\sqrt{n} u)$ and it is lower than the AH1 and AH2 bounds in several situations for both inner product and Horner algorithms.

\subsection{Azuma-Hoeffding method}\label{azuma-method}
Let us recall the concept of a martingale and the Azuma-Hoeffding inequality for a martingale~\cite{azum}. 

\begin{defn}
\label{def:martingale}
A sequence of random variables $M_1, ... , M_n$ is a martingale with respect to the sequence $X_1, . . . , X_n$ if, for all $k,$
\begin{itemize}
    \item $M_k$ is a function of $X_1, ..., X_k$,
    \item $\mathbb{E}(\lvert M_k \rvert ) < \infty,$ and
    \item $\mathbb{E}[M_k /  X_1, ..., X_{k-1}]=M_{k-1}$.
\end{itemize}

\end{defn}

\begin{lemma}(Azuma-Hoeffding inequality). Let $M_0, ..., M_n$ be a martingale with respect to a sequence $X_1, . . . , X_n.$ We assume that there exist $a_k<b_k$ such that $a_k \leq M_k - M_{k-1} \leq  b_k$ for $k = 1: n.$ Then, for any $A  > 0$ 
$$ \mathbb{P}(\lvert M_n - M_0 \rvert \geq A) \leq 2 \exp \left( 
-\frac{2A^2}{\sum_{k=1}^n(b_k-a_k)^2} 
\right).
$$
In the particular case $a_k=-b_k$ and $\lambda = 2 \exp \left( 
-\frac{A^2}{2\sum_{k=1}^n b_k^2} \right) $ we have 
$$ \mathbb{P}\left( \lvert M_n - M_0 \rvert \leq \sqrt{\sum_{k=1}^n b_k^2} \sqrt{2 \ln (2 / \lambda)} \right) \geq 1- \lambda,
$$
where $0< \lambda <1$.
\end{lemma}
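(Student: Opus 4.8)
The plan is to establish the one-sided tail bound $\mathbb{P}(M_n - M_0 \geq A) \leq \exp(-2A^2/\sum_{k=1}^n (b_k-a_k)^2)$ first, then recover the two-sided statement by symmetry and a union bound, and finally invert the resulting inequality to reach the high-probability form. First I would introduce the martingale differences $D_k = M_k - M_{k-1}$, so that $M_n - M_0 = \sum_{k=1}^n D_k$ and, by the third bullet of Definition~\ref{def:martingale}, each difference has conditional mean zero: $\mathbb{E}[D_k / X_1, \ldots, X_{k-1}] = 0$. The hypothesis $a_k \leq M_k - M_{k-1} \leq b_k$ then says precisely that $D_k$ takes values in a fixed interval of length $b_k - a_k$.

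Next I would apply the exponential Markov (Chernoff) inequality: for any $s > 0$,
\begin{equation*}
\mathbb{P}(M_n - M_0 \geq A) = \mathbb{P}\big(e^{s(M_n-M_0)} \geq e^{sA}\big) \leq e^{-sA}\, \mathbb{E}\big(e^{s(M_n - M_0)}\big).
\end{equation*}
The central step is to control this exponential moment by peeling off one difference at a time. Writing $e^{s(M_n-M_0)} = e^{s(M_{n-1}-M_0)} e^{sD_n}$ and conditioning on $X_1, \ldots, X_{n-1}$, the factor $e^{s(M_{n-1}-M_0)}$ is measurable and can be pulled out, leaving $\mathbb{E}[e^{sD_n}/X_1,\ldots,X_{n-1}]$ to be bounded. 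Here I would invoke Hoeffding's lemma in its conditional form: a variable with conditional mean zero taking values in an interval of length $b_n - a_n$ satisfies $\mathbb{E}[e^{sD_n}/X_1,\ldots,X_{n-1}] \leq \exp(s^2(b_n-a_n)^2/8)$. Iterating over $k = n, n-1, \ldots, 1$ yields $\mathbb{E}(e^{s(M_n-M_0)}) \leq \exp\big(\tfrac{s^2}{8}\sum_{k=1}^n (b_k-a_k)^2\big)$.

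With this in hand I would optimise the Chernoff bound over $s > 0$; the exponent $-sA + \tfrac{s^2}{8}\sum_k (b_k-a_k)^2$ is minimised at $s = 4A/\sum_k (b_k-a_k)^2$, giving the one-sided bound with exponent $-2A^2/\sum_k (b_k-a_k)^2$. Applying the same argument to the martingale $-M_k$ bounds $\mathbb{P}(M_n - M_0 \leq -A)$ identically, and a union bound over the two events produces the factor of $2$. For the particular case, substituting $a_k = -b_k$ gives $(b_k-a_k)^2 = 4b_k^2$, so the tail bound becomes $\mathbb{P}(|M_n-M_0| \geq A) \leq 2\exp(-A^2/(2\sum_k b_k^2)) = \lambda$; solving this defining relation for $A$ gives $A = \sqrt{\sum_k b_k^2}\,\sqrt{2\ln(2/\lambda)}$, and passing to the complementary event turns $\mathbb{P}(|M_n-M_0| \geq A) \leq \lambda$ into the claimed $\mathbb{P}(|M_n-M_0| \leq A) \geq 1-\lambda$.

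The main obstacle is Hoeffding's lemma and, more precisely, its use inside the conditional expectation. Proving the scalar inequality $\mathbb{E}(e^{sZ}) \leq \exp(s^2(b-a)^2/8)$ for a zero-mean $Z \in [a,b]$ requires a convexity argument: one bounds $e^{sZ}$ by the chord through its endpoint values at $a$ and $b$, takes expectations, and then analyses the resulting function of $s$ via a second-order Taylor expansion (equivalently, by showing its logarithm has second derivative at most $(b-a)^2/4$). Transporting this to the conditional setting is the delicate point, but it is legitimate here because the interval $[a_k,b_k]$ is deterministic and $D_k$ has conditional mean zero, so the scalar lemma applies verbatim to the conditional law of $D_k$; the martingale structure is exactly what guarantees the vanishing conditional mean at each peeling step.
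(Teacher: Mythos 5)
Your proof is correct: it is the classical Chernoff-bound argument (exponential Markov inequality, conditional Hoeffding lemma applied to each martingale difference, optimization over $s$, symmetry plus a union bound for the two-sided statement, and inversion of $\lambda = 2\exp\bigl(-A^2/(2\sum_{k=1}^n b_k^2)\bigr)$ to obtain the high-probability form). Note, however, that the paper itself offers no proof of this lemma --- it is stated as a known result and cited from the literature --- so there is no in-paper argument to compare against; your derivation simply supplies the standard proof of the quoted inequality, and all steps, including the delicate use of Hoeffding's lemma inside the conditional expectation (legitimate here because the intervals $[a_k,b_k]$ are deterministic and each difference has vanishing conditional mean), are sound.
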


\subsubsection{Inner product}\label{inner-product}

Under SR-nearness, the inner product $y=a^{\top}b,$ where $a,b\in \mathbb{R}^n$ is defined as
$\widehat y = \widehat s_n =\sum_{i=1}^{n} a_ib_i(1+\delta_{2(i-1)}) \prod_{k=i}^n (1+\delta_{2k-1}).$ The worst case of the forward error of the computed $\widehat y$ is in $O(n u)$. Wilkinson~\cite[sec 1.33]{wilk} had the intuition that the roundoff error accumulated in $n$ operations is proportional to $\sqrt{n} u$ rather than $n u$. Based on the mean independence of errors established in Lemma~\ref{meanindp}, Connelly et al.~\cite{theo21stocha} and Ilse, Ipsen and Zhou~\cite{ilse} have investigated this problem for SR-nearness. Both works build on the mean independence property of SR-nearness. This allows them to form a martingale, and then to apply the Azuma-Hoeffding concentration inequality. The difference between these two works is in the way they form the martingale. In \cite[sec 3]{theo21stocha}, the martingale is built using the errors accumulated in the whole process $\psi_{K_i}=(1+\delta_{2(i-1)}) \prod_{k=i}^n (1+\delta_{2k-1})$ for all $ 1\leq i \leq n$. Azuma-Hoeffding inequality implies that $\lvert \psi_{K_i} \rvert \leq \tilde{\gamma}_n(\lambda)$ with probability at least $1-2\exp{\frac{-\lambda^2}{2}}$, where $\tilde{\gamma}_n(\lambda) = \exp{\frac{\lambda\sqrt{n}u + nu^2}{1-u}} -1.$ This approach uses the inclusion-exclusion principle to generalize the bound to the summation, which results in a pessimistic $n$ in the probability. They prove 
$$ \frac{\lvert \widehat y  - y \rvert}{\lvert y \rvert}    \leq \mathcal{K}_1 \tilde{\gamma}_n(\lambda),
$$
with probability at least $1-2n\exp{\frac{-\lambda^2}{2}}$. The factor $n$ in the probability disrupts the $\sqrt{n}u$ property. $\delta = 2n\exp{\frac{-\lambda^2}{2}}$ implies that $\lambda = \sqrt{2\ln{(2n/\delta)}}$ and
\begin{align}
    \frac{\lvert \widehat y  - y \rvert}{\lvert y \rvert} \leq \mathcal{K}_1 \tilde{\gamma}_n\big(\sqrt{2\ln{(2n/\delta)}} \big), \label{higham-bound-inn} \tag{AH1-IP} 
\end{align}
with probability at least $1-\delta$. When $nu \ll 1$, we have
\begin{align*}
    \tilde{\gamma}_n(\sqrt{2\ln{(2n/\delta)}}) &= \exp{\frac{\sqrt{2n\ln{(2n/\delta)}}u + nu^2}{1-u}} -1 \\
    &= u\sqrt{2n\ln{(2n/\delta)}}  + O(u^2)\\
    &= u\sqrt{2n\ln{2n} -2n\ln{\delta}}  + O(u^2)= O(u\sqrt{n\ln{n})}.
\end{align*}

On the other hand,~\cite[sec 4]{ilse} forms it by following step-by-step how the error accumulates in the recursive summation of the inner product. In particular, they distinguish between the multiplications and additions computed at each step and carefully monitor their mean independences. This approach leads to the following probabilistic bound
\begin{equation}
    \frac{\lvert \widehat y  - y \rvert}{\lvert y \rvert}    \leq \mathcal{K}_1 \sqrt{u \gamma_{2n}(u)} \sqrt{\ln (2 / \delta)}, \label{ilse-bound-inner}\tag{AH2-IP}
\end{equation}
with probability at least $1-\delta$. This technique avoids the inclusion-exclusion principle and when $nu \ll 1$, it leads to 
$$ \sqrt{u \gamma_{2n}(u)} \sqrt{\ln (2 / \delta)} = u\sqrt{2n\ln{2} -2n \ln{\delta}} +O(u^2).
$$

Note that when $nu \ll 1$,~(\ref{higham-bound-inn}) and~(\ref{ilse-bound-inner}) differ only in the factor $\sqrt{\ln{n}}$ that appears in~(\ref{higham-bound-inn}) due to the use of the martingale property on each partial sum. All in all, ~(\ref{ilse-bound-inner}) is proportional to $u\sqrt{n}$, while~(\ref{higham-bound-inn}) is proportional to $u\sqrt{n\ln{n}}$. An analysis of the case $nu \gg 1$ will be presented in Section~\ref{bound-analyze}. 

\subsubsection{Horner algorithm}

In the following, we derive a probabilistic bound for the computed $\widehat{P}(x)$ based on the previous method applied for the inner product in~\cite[sec 4]{ilse}.

With the notations defined in Model~\ref{mod}, let us denote $Z_i :=\widehat r_i - r_i$ for all $0 \leq i \leq 2n$. The total forward error is $\lvert Z_{2n} \rvert = \lvert \widehat r_{2n} -  r_{2n} \rvert = \lvert \widehat{P}(x)  - P(x) \rvert$ and 

\begin{align*}
\lvert \widehat{P}(x)  - P(x) \rvert &=  \left\lvert \sum_{i=0}^n a_i x^{i} \left( \prod_{k=2(n -i)}^{2n} (1+\delta_k)-1 \right)  \right\rvert 
\leq \sum_{i=0}^n \lvert  a_i x^i\rvert  \gamma_{2n}(u).
\end{align*}
Finally, 

\begin{equation}\label{detbound}
    \frac{\lvert \widehat{P}(x)  - P(x) \rvert}{\lvert P(x) \rvert} \leq \mathcal{K}_1 \gamma_{2n}(u).
\end{equation}
The deterministic bound is proportional to $nu$. In the following, we prove a probabilistic bound in $O(\sqrt{n}u)$.

The partial sum forward errors satisfy
\begin{align*}
Z_{2k-1} &= \widehat r_{2k-1} - r_{2k-1} = \widehat r_{2k-2}x (1+\delta_{2k-1}) - r_{2k-2}x\\
         &= xZ_{2k-2} +\widehat r_{2k-2}x \delta_{2k-1},  \\
 Z_{2k}  &= \widehat r_{2k} - r_{2k} = (\widehat r_{2k-1} +a_{n-k})(1+\delta_{2k}) - r_{2k-1} - a_{n-k}\\
         &= Z_{2k-1} +(\widehat r_{2k-1} +a_{n-k}) \delta_{2k}, 
\end{align*}
for all $1 \leq k \leq n$.
The sequence $Z_0,...,Z_{2n}$ does not form a martingale with respect to $\delta_1,...,\delta_{2n}$ due to the multiplication in odd steps, $$ E[Z_{2k-1}/\delta_1,...,\delta_{2k-2}]= xZ_{2k-2}.$$
In order to form a martingale and use the Azuma-Hoeffding inequality, we define the following variable change  
$$ Y_i = \frac{Z_i}{x^{\llfloor (i+1)/2 \rrfloor}},$$
where $\llfloor (i+1)/2 \rrfloor$ is the integer part of $(i+1)/2$, we thus have
  \begin{align}
      \left\{
     \begin{array}{ccl}
         Y_{2k-1} &=& Y_{2k-2} + \frac{1}{x^{k-1}}\widehat r_{2k-2} \delta_{2k-1}, \\
         Y_{2k}  &=& Y_{2k-1} + \frac{1}{x^k}(\widehat r_{2k-1} + a_{n-k})\delta_{2k},
          \end{array} 
          \right. 
\label{eq2} \end{align}
for all $1 \leq k \leq n$ with $Y_0 = 0$.

\begin{theorem}
The sequence of random variables $Y_0,...,Y_{2n}$ is a martingale with respect to $\delta_1, ..., \delta_{2n}$.
\end{theorem}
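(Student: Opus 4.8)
The plan is to verify directly the three defining properties of a martingale from Definition~\ref{def:martingale} for the sequence $Y_0, \ldots, Y_{2n}$ with respect to $\delta_1, \ldots, \delta_{2n}$, exploiting the recursion~\eqref{eq2} together with the mean independence established in Lemma~\ref{meanindp}.

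First I would check the measurability condition, namely that each $Y_i$ is a function of $\delta_1, \ldots, \delta_i$, by induction on $i$. The base case $Y_0 = 0$ is trivial. For the inductive step, I would observe from Model~\ref{mod} that $\widehat r_{2k-2}$ depends only on $\delta_1, \ldots, \delta_{2k-2}$ and that $\widehat r_{2k-1}$ depends only on $\delta_1, \ldots, \delta_{2k-1}$; combined with the relations $Y_{2k-1} = Y_{2k-2} + x^{-(k-1)}\widehat r_{2k-2}\,\delta_{2k-1}$ and $Y_{2k} = Y_{2k-1} + x^{-k}(\widehat r_{2k-1} + a_{n-k})\,\delta_{2k}$ from~\eqref{eq2}, this shows $Y_{2k-1}$ is a function of $\delta_1, \ldots, \delta_{2k-1}$ and $Y_{2k}$ a function of $\delta_1, \ldots, \delta_{2k}$. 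Integrability $\mathbb{E}(\lvert Y_i \rvert) < \infty$ is then immediate: each $\delta_k$ satisfies $\lvert \delta_k \rvert \leq u$ and each $\widehat r_j$ is a finite product of factors $(1+\delta_\ell)$ times fixed coefficients, so every $Y_i$ is a bounded random variable.

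The substantive step is the martingale identity $\mathbb{E}[Y_i / \delta_1, \ldots, \delta_{i-1}] = Y_{i-1}$, which I would establish by treating odd and even indices separately. For an odd index $i = 2k-1$, I would take the conditional expectation of the first line of~\eqref{eq2}: since $Y_{2k-2}$ and $\widehat r_{2k-2}$ are functions of $\delta_1, \ldots, \delta_{2k-2}$ by the previous step, they factor out of $\mathbb{E}[\cdot / \delta_1, \ldots, \delta_{2k-2}]$, leaving the term $x^{-(k-1)}\widehat r_{2k-2}\,\mathbb{E}[\delta_{2k-1}/\delta_1,\ldots,\delta_{2k-2}]$, which vanishes by Lemma~\ref{meanindp}. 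An identical argument applied to the second line of~\eqref{eq2} handles the even index $i = 2k$, using that $\widehat r_{2k-1}$ is a function of $\delta_1, \ldots, \delta_{2k-1}$ and that $\mathbb{E}[\delta_{2k}/\delta_1,\ldots,\delta_{2k-1}] = 0$.

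The main difficulty is not any delicate estimate but the bookkeeping. The variable change $Y_i = Z_i / x^{\llfloor (i+1)/2\rrfloor}$ was introduced precisely to absorb the multiplicative factor $x$ coming from the product (odd) step of Horner's rule, which produced the term $xZ_{2k-2}$ that prevented $Z_0,\ldots,Z_{2n}$ from being a martingale. Thus the real care is in identifying exactly which errors $\delta_\ell$ each factor depends on, so that Lemma~\ref{meanindp} is invoked with the correct conditioning set; once this measurability is tracked carefully, the mean-zero property of each $\delta_\ell$ given its predecessors closes the argument.
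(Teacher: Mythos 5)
Your proposal is correct and follows essentially the same route as the paper's proof: both verify the three conditions of Definition~\ref{def:martingale} directly, using the recursion~\eqref{eq2} to factor $Y_{i-1}$ and the $\widehat r$ terms out of the conditional expectation and invoking the mean independence of Lemma~\ref{meanindp} to kill the remaining $\delta$ term, treating odd and even indices separately. Your explicit induction for the measurability condition is slightly more detailed than the paper's one-line appeal to the recursion in Model~\ref{mod}, but the argument is the same.
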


\begin{proof}
We check that the three conditions of Definition~\ref{def:martingale} are satisfied. Throughout ~the ~proof, ~we ~note ~the ~set ~$\mathbb{F}_k= \{\delta_1, ..., \delta_k\}$.

\begin{itemize}
   \item The recursion in Model~\ref{mod} shows that $Y_i$ is a function of $\delta_1, ..., \delta_{i}$ for all $1 \leq i \leq 2n$.
   
   \item $\mathbb{E}(\lvert Y_i \rvert ) $ is finite because $x$ and $a_k$ are finite ~for ~all ~$n-i \leq k \leq n$ and $\lvert \delta_j \rvert \leq u$ for all $1 \leq j \leq i$.
    
    \item We prove that $\mathbb{E}[Y_i /\mathbb{F}_{i-1}] = Y_{i-1}$ by distinguishing the even and odd cases. 
    Firstly, using the mean independence of $\delta_1, ... \delta_{2k-1}$ and Equation~(\ref{eq2}) we obtain
    \begin{align*}
    \mathbb{E}[Y_{2k-1}/\mathbb{F}_{2k-2}] &= \mathbb{E}[Y_{2k-2}/\mathbb{F}_{2k-2}] 
     + \mathbb{E}[ \frac{1}{x^{k-1}}\widehat r_{2k-2} \delta_{2k-1}/\mathbb{F}_{2k-2}]\\
    &= Y_{2k-2} 
     + \frac{1}{x^{k-1}}\widehat r_{2k-2} \mathbb{E}[\delta_{2k-1}/\mathbb{F}_{2k-2}] = Y_{2k-2}.
    \end{align*}
    \end{itemize}
    
Secondly, using the mean independence of $\delta_1, ... \delta_{2k}$ and Equation~(\ref{eq2}) we obtain
    \begin{align*}
    \mathbb{E}[Y_{2k}/\mathbb{F}_{2k-1}] &= \mathbb{E}[Y_{2k-1}/\mathbb{F}_{2k-1}] + \mathbb{E}[\frac{1}{x^k}(\widehat r_{2k-1} + a_{n-k})\delta_{2k}/\mathbb{F}_{2k-1}]\\
    &= Y_{2k-1} + \frac{1}{x^k}(\widehat r_{2k-1} + a_{n-k})\mathbb{E}[\delta_{2k}/\mathbb{F}_{2k-1}]= Y_{2k-1}.
    \end{align*}
\end{proof}

\begin{lemma}\label{cst-bound}
The above martingale $Y_0,..., Y_{2n}$ satisfies
$ \lvert Y_i - Y_{i-1} \rvert \leq C_i u$, for all $1\leq i \leq 2n,$
where
 \begin{align*}
      \left\{
     \begin{array}{ccl}
       C_{2k-1} &=&  \lvert a_n \rvert (1+u)^{2k-2} + \sum_{j=1}^{k-1} \lvert a_{n-j} \rvert \lvert x \rvert^{-j}(1+u)^{2(k-j)-1},\\
       C_{2k} &=& \lvert a_n \rvert (1+u)^{2k-1} + \sum_{j=1}^k \lvert a_{n-j} \rvert \lvert x \rvert^{-j}(1+u)^{2(k-j)},
\end{array} 
          \right. 
 \end{align*}
for all $1\leq k \leq n.$
\end{lemma}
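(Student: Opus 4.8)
The plan is to read the one-step increments straight off the martingale recursion~(\ref{eq2}) and reduce the whole statement to an a priori bound on two partial Horner iterates. From~(\ref{eq2}) we have
\[
Y_{2k-1}-Y_{2k-2} = \frac{1}{x^{k-1}}\,\widehat r_{2k-2}\,\delta_{2k-1},
\qquad
Y_{2k}-Y_{2k-1} = \frac{1}{x^{k}}\,(\widehat r_{2k-1}+a_{n-k})\,\delta_{2k},
\]
and since $\lvert\delta_{2k-1}\rvert,\lvert\delta_{2k}\rvert\leq u$ it suffices to prove the deterministic estimates $\frac{1}{\lvert x\rvert^{k-1}}\lvert\widehat r_{2k-2}\rvert\leq C_{2k-1}$ and $\frac{1}{\lvert x\rvert^{k}}\lvert\widehat r_{2k-1}+a_{n-k}\rvert\leq C_{2k}$. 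So the real content is a bound on the magnitude of the intermediate iterates $\widehat r_{2k-2}$ and $\widehat r_{2k-1}$.

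The key step is to expand these iterates exactly, exactly as the excerpt expands $\widehat r_{2n}$. Tracking how the coefficient $a_{n-j}$ enters at the addition step $2j$, is subsequently multiplied by $x$, and picks up one error factor per elementary operation, I would establish
\[
\widehat r_{2k-2} = \sum_{j=0}^{k-1} a_{n-j}\,x^{\,k-1-j}\prod_{l=2j}^{2k-2}(1+\delta_l),
\qquad
\widehat r_{2k-1} = \sum_{j=0}^{k-1} a_{n-j}\,x^{\,k-j}\prod_{l=2j}^{2k-1}(1+\delta_l),
\]
either by a short induction on $k$ using Model~\ref{mod}, or by specializing the given expansion of $\widehat r_{2n}$ to the truncated problem. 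Bounding each factor by $\lvert 1+\delta_l\rvert\leq 1+u$ and counting that $\prod_{l=2j}^{2k-2}$ has $2k-2j-1$ factors (resp.\ $\prod_{l=2j}^{2k-1}$ has $2k-2j$ factors) then yields the announced powers of $1+u$ for every index $j\geq 1$, namely $(1+u)^{2(k-j)-1}$ in the odd case and $(1+u)^{2(k-j)}$ in the even case.

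The one subtlety I would flag as the main obstacle is the leading term $j=0$. Because $\delta_0=0$, the factor $(1+\delta_0)$ is trivial, so the product attached to $a_n$ effectively carries one fewer nontrivial factor than the naive count suggests; this is precisely what converts the expected $(1+u)^{2k-1}$ into $(1+u)^{2k-2}$ in $C_{2k-1}$, and it must be handled explicitly. After dividing $\lvert\widehat r_{2k-2}\rvert$ by $\lvert x\rvert^{k-1}$, all the $x$-powers collapse to $\lvert x\rvert^{-j}$, which reproduces $C_{2k-1}$ term by term. For the even increment I would note that adding the extra coefficient $a_{n-k}$ to $\widehat r_{2k-1}$ and then dividing by $\lvert x\rvert^{k}$ contributes the single summand $\lvert a_{n-k}\rvert\,\lvert x\rvert^{-k}(1+u)^{0}$, which is exactly the $j=k$ term of $C_{2k}$; collecting the remaining $j=0,\dots,k-1$ contributions as above completes the bound.
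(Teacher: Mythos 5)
Your proposal is correct, and it follows the same reduction as the paper's own proof: both read the one-step increments off Equation~(\ref{eq2}), invoke $\lvert\delta_{2k-1}\rvert,\lvert\delta_{2k}\rvert\le u$, and thereby reduce the lemma to deterministic bounds on $\lvert\widehat r_{2k-2}\rvert/\lvert x\rvert^{k-1}$ and $\lvert\widehat r_{2k-1}+a_{n-k}\rvert/\lvert x\rvert^{k}$. Where you diverge is in how those bounds are produced. The paper unrolls the Horner recursion as an \emph{inequality}, $\lvert\widehat r_{2k-2}\rvert\le\lvert\widehat r_{2k-4}\rvert\,\lvert x\rvert(1+u)^2+\lvert a_{n-k+1}\rvert(1+u)$, concludes by induction on $k$, and dismisses the even-index constants $C_{2k}$ as ``similar''. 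You instead establish an \emph{exact} product expansion of the partial iterates (the same shape as the paper's expansion of $\widehat r_{2n}$), then bound every factor by $1+u$ and count factors, handling the $j=0$ term explicitly via $\delta_0=0$. The two routes are equivalent in substance, but yours has modest advantages: the factor count makes the exponents $(1+u)^{2(k-j)-1}$, $(1+u)^{2(k-j)}$ and the reduced exponent on the $\lvert a_n\rvert$ term transparent; it treats both parities, including the extra $j=k$ summand of $C_{2k}$, uniformly rather than by analogy; and it automatically produces the correct powers of $\lvert x\rvert$, sidestepping an index slip in the paper's displayed intermediate bound (which reads $\lvert x\rvert^{k}$ and $\lvert x\rvert^{k-j}$ where $\lvert x\rvert^{k-1}$ and $\lvert x\rvert^{k-1-j}$ are needed for the division by $\lvert x\rvert^{k-1}$ to return $C_{2k-1}$). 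The only cost is that your expansion needs its own short induction on $k$, which, as you note, is routine from Model~\ref{mod}.
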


\begin{proof}
Note that $Y_0=0$, then $\lvert Y_1 - Y_0 \rvert = \lvert Y_1 \rvert = \lvert a_n \rvert $ and the equality holds for $C_1$. Using Equation~(\ref{eq2})
$$ \lvert Y_{2k-1} - Y_{2k-2} \rvert  \leq \frac{1}{\lvert x \rvert^{k-1}} \lvert \widehat r_{2k-2} \rvert u.
$$
Moreover,
\begin{align*}
\lvert \widehat r_{2k-2} \rvert  &\leq \lvert \widehat r_{2k-3} \rvert (1+u) + \lvert a_{n-k+1}\rvert (1+u) \leq \lvert \widehat r_{2k-4} \rvert \lvert x \rvert (1+u)^2 + \lvert a_{n-k+1}\rvert (1+u),
\end{align*}
by induction we obtain
$$ \lvert \widehat r_{2k-2} \rvert \leq \lvert a_n \rvert  \lvert x \rvert^k (1+u)^{2k-2} + \sum_{j=1}^{k-1} \lvert a_{n-j} \rvert \lvert x \rvert^{k-j}(1+u)^{2(k-j)-1}.
$$
This completes the proof for $C_{2k-1}$ for all $1\leq k \leq n$. A similar approach can be applied to prove the same result for $C_{2k}$ for all $1\leq k \leq n$.
\end{proof}
We now have all the tools to state and demonstrate the main result of this section:

\begin{theorem}
Under SR-nearness, for all $0 < \lambda <1$ and with probability at least $1-\lambda$
\begin{equation}
    \frac{\lvert \widehat{P}(x)   - P(x) \rvert}{\lvert P(x) \rvert} \leq \mathcal{K}_1  \sqrt{u \gamma_{4n}(u)} \sqrt{\ln (2 / \lambda)},
\end{equation}
where $\mathcal{K}_1 = \frac{\sum_{i=0}^n \lvert a_i x^i \rvert}{\lvert P(x) \rvert}$ is the condition number of the polynomial evaluation and $\gamma_{4n}(u)=(1+u)^{4n} -1$.
\end{theorem}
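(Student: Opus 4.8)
The plan is to feed the martingale $Y_0,\ldots,Y_{2n}$ constructed in the previous theorem into the Azuma--Hoeffding inequality, using the increment bounds $\lvert Y_i - Y_{i-1}\rvert \leq C_i u$ supplied by Lemma~\ref{cst-bound}. Taking the symmetric case $a_i=-b_i$ of that inequality with $b_i = C_i u$, and setting $\lambda = 2\exp\!\big(-A^2/(2\sum_i b_i^2)\big)$, I would obtain
$$ \mathbb{P}\left( \lvert Y_{2n} - Y_0\rvert \leq u\sqrt{\sum_{i=1}^{2n} C_i^2}\,\sqrt{2\ln(2/\lambda)}\right) \geq 1-\lambda. $$
Because $Y_0=0$ and $\llfloor (2n+1)/2\rrfloor = n$, the variable change gives $Y_{2n} = Z_{2n}/x^{n} = (\widehat P(x)-P(x))/x^{n}$, so that $\lvert \widehat P(x)-P(x)\rvert = \lvert x\rvert^{n}\lvert Y_{2n}\rvert$. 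Everything then reduces to turning the constant $\lvert x\rvert^{n} u\sqrt{\sum_i C_i^2}$ into the claimed $\mathcal{K}_1\lvert P(x)\rvert\sqrt{u\gamma_{4n}(u)}$.

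The first step is to absorb the factor $\lvert x\rvert^{n}$ into the $C_i$. Multiplying the formulas of Lemma~\ref{cst-bound} by $\lvert x\rvert^{n}$ turns every term $\lvert a_{n-j}\rvert\lvert x\rvert^{-j}$ into $\lvert a_{n-j}x^{n-j}\rvert$, i.e. a summand of $S := \sum_{i=0}^n \lvert a_i x^i\rvert = \mathcal{K}_1\lvert P(x)\rvert$. Observing that the largest power of $1+u$ appearing in $\lvert x\rvert^{n}C_{2k-1}$ is $(1+u)^{2k-2}$, and in $\lvert x\rvert^{n}C_{2k}$ is $(1+u)^{2k-1}$, I would factor these out and bound the remaining partial sum of the $\lvert a_i x^i\rvert$ by $S$, giving the clean estimates
$$ \lvert x\rvert^{n}C_{2k-1} \leq (1+u)^{2k-2}S, \qquad \lvert x\rvert^{n}C_{2k} \leq (1+u)^{2k-1}S. $$

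The second step is the geometric sum. Squaring and summing over $k$ gives $\lvert x\rvert^{2n}\sum_{i=1}^{2n}C_i^2 \leq S^2\sum_{k=1}^n\big[(1+u)^{4k-4}+(1+u)^{4k-2}\big]$, and with $r=(1+u)^4$ the bracket collapses to $\big(1+(1+u)^2\big)\frac{r^n-1}{r-1}$. The crucial point is the factorization $(1+u)^4-1 = u(2+u)\big((1+u)^2+1\big)$, which cancels the numerator factor $1+(1+u)^2$ and leaves exactly $\gamma_{4n}(u)/\big(u(2+u)\big)$. Substituting back yields
$$ \lvert \widehat P(x)-P(x)\rvert \leq S\sqrt{\frac{2u\,\gamma_{4n}(u)}{2+u}}\,\sqrt{\ln(2/\lambda)} \leq S\sqrt{u\,\gamma_{4n}(u)}\,\sqrt{\ln(2/\lambda)}, $$
where the last inequality uses $2/(2+u)\leq 1$; this is precisely the factor that disposes of the extra $\sqrt{2}$ produced by Azuma--Hoeffding. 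Dividing by $\lvert P(x)\rvert$ and recalling $S=\mathcal{K}_1\lvert P(x)\rvert$ delivers the stated bound with probability at least $1-\lambda$.

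I expect the main obstacle to be this second step: forcing the geometric series to collapse to exactly $\gamma_{4n}(u)/\big(u(2+u)\big)$ rather than a messier closed form, and in particular spotting the cancellation that simultaneously produces the clean $\gamma_{4n}(u)$ and supplies the $2/(2+u)$ factor needed to absorb the $\sqrt{2}$ from the concentration inequality. By contrast the martingale property and the increment bounds are already established, so once the two reductions above are carried out the remainder is routine bookkeeping.
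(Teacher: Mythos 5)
Your proposal is correct and follows essentially the same route as the paper: Azuma--Hoeffding applied to the martingale $Y_0,\ldots,Y_{2n}$ with the increment bounds of Lemma~\ref{cst-bound}, absorption of $\lvert x\rvert^{n}$ into the $C_i$ to get $\lvert x\rvert^{n}C_{2k-1}\leq(1+u)^{2k-2}S$ and $\lvert x\rvert^{n}C_{2k}\leq(1+u)^{2k-1}S$, a geometric sum yielding $\gamma_{4n}(u)/\bigl(u(2+u)\bigr)$, and the factor $2/(2+u)\leq 1$ to absorb the $\sqrt{2}$. The only cosmetic difference is that the paper sums the single geometric series $\sum_{i=0}^{2n-1}\bigl((1+u)^2\bigr)^i$ directly with ratio $(1+u)^2$, whereas you pair odd and even terms with ratio $(1+u)^4$ and then use the factorization $(1+u)^4-1=u(2+u)\bigl((1+u)^2+1\bigr)$ --- the same closed form either way.
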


\begin{proof}
Recall that $\lvert \widehat r_{2n} - r_{2n} \rvert = \lvert Z_{2n} \rvert = \lvert x^n \rvert \lvert Y_{2n} \rvert$. Therefore, $ Y_0,...,Y_{2n}$ is a martingale with respect to $\delta_1, ..., \delta_{2n}$ and Lemma~\ref{cst-bound} implies $ \lvert Y_i - Y_{i-1} \rvert \leq C_i u$ for all $1\leq i \leq 2n$. 
Using the Azuma-Hoeffding inequality yields
$$\mathbb{P}\left( \lvert Y_{2n} \rvert \leq u \sqrt{\sum_{i=1}^{2n}C_i^2}\sqrt{2 \ln (2 / \lambda)}\right) \geq 1-\lambda,
$$
it follows that
$$ \lvert Z_{2n} \rvert \leq u \sqrt{\sum_{i=1}^{2n} (\lvert x \rvert^n C_i)^2}\sqrt{2 \ln (2 / \lambda)}, 
$$
with probability at least $1-\lambda$, where
\begin{align*}
\lvert x \rvert^n C_{2k} &= \lvert a_n \rvert \lvert x \rvert^n (1+u)^{2k-1} + \sum_{j=1}^k \lvert a_{n-j} x^{n-j} \rvert(1+u)^{2(k-j)}\\
&\leq(1+u)^{2k-1} \sum_{j=0}^k \lvert a_{n-j} x^{n-j} \rvert \leq(1+u)^{2k-1} \sum_{j=0}^n \lvert a_{j} x^j \rvert,
\end{align*}
for all $1\leq k \leq n.$
Hence, $$ (\lvert x \rvert^n C_{2k})^2 \leq (1+u)^{2(2k-1)} \big(\sum_{j=0}^n \lvert a_{j} x^j \rvert \big)^2.
$$
In a similar way,
$$ (\lvert x \rvert^n C_{2k-1})^2 \leq (1+u)^{2(2k-2)} \big(\sum_{j=0}^n \lvert a_{j} x^j \rvert \big)^2.
$$
Thus, 
\begin{align*}
\sum_{i=1}^{2n} (\lvert x \rvert^n C_i)^2 &\leq \big(\sum_{j=0}^n \lvert a_{j} x^j \rvert \big)^2 \sum_{i=0}^{2n-1} ((1+u)^{2})^i\\
&= \big(\sum_{j=0}^n \lvert a_{j} x^j \rvert \big)^2 \frac{((1+u)^2)^{2n}-1}{(1+u)^2-1}=  \big(\sum_{j=0}^n \lvert a_{j} x^j \rvert \big)^2  \frac{\gamma_{4n}(u)}{u^2+2u}.
\end{align*}
As a result, 
$$ \lvert \widehat{P}(x)  - P(x) \rvert = \lvert Z_{2n} \rvert \leq \sum_{j=0}^n \lvert a_{j} x^j \rvert  \sqrt{\frac{u \gamma_{4n}(u)}{2+u}}  \sqrt{2 \ln (2 / \lambda)},
$$
with probability at least $1-\lambda$.
Finally,
$$
    \frac{\lvert \widehat{P}(x)  - P(x) \rvert}{\lvert P(x) \rvert} \leq \mathcal{K}_1 \sqrt{u \gamma_{4n}(u)} \sqrt{\ln (2 / \lambda)},
$$
with probability at least $1-\lambda$.
\end{proof}

\subsection{Bienaymé–Chebyshev method}

Another way to obtain a probabilistic $O(\sqrt{n}u)$ bound is to use Bienaymé–Chebyshev inequality. This method requires only information on the variance. Moreover, we will see in Section~\ref{bound-analyze} that for any probability $\lambda$ there exists $n$ such that this method introduces a tighter probabilistic bound than the Azuma-Hoeffding method.

\begin{lemma}(Bienaymé–Chebyshev inequality)\label{Bienaymé–Chebyshev-inequality}
Let $X$ be a random variable with finite expected value and finite non-zero variance. For any real number $\alpha > 0$,
$$ \mathbb{P}\big(\lvert X - E(X) \rvert \leq \alpha \sqrt{V(X)}\big) \geq 1- \frac{1}{\alpha^2}.
$$
\end{lemma}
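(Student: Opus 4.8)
The plan is to deduce this classical concentration inequality from Markov's inequality applied to the squared, centered random variable $(X-E(X))^2$. Since Markov's inequality is not among the results established earlier in the paper, I would first record it in a single line: for any nonnegative random variable $Y$ with finite mean and any threshold $t>0$, restricting the expectation to the event $\{Y \geq t\}$ gives $E(Y) \geq t\,\mathbb{P}(Y \geq t)$, hence $\mathbb{P}(Y \geq t) \leq E(Y)/t$.

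Next I would carry out the following steps in order. First, set $Y = (X - E(X))^2$, which is nonnegative and, by the hypothesis that $X$ has finite expectation and finite nonzero variance, satisfies $E(Y) = V(X) \in (0, \infty)$. Second, apply Markov's inequality with the threshold $t = \alpha^2 V(X)$, which is legitimate because $\alpha > 0$ and $V(X) > 0$ make $t > 0$; this yields $\mathbb{P}\big((X-E(X))^2 \geq \alpha^2 V(X)\big) \leq V(X)/(\alpha^2 V(X)) = 1/\alpha^2$. Third, observe that since $\alpha\sqrt{V(X)} \geq 0$, the events $\{(X-E(X))^2 \geq \alpha^2 V(X)\}$ and $\{\lvert X - E(X)\rvert \geq \alpha\sqrt{V(X)}\}$ coincide, so $\mathbb{P}\big(\lvert X - E(X)\rvert \geq \alpha\sqrt{V(X)}\big) \leq 1/\alpha^2$. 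Finally, pass to the complementary event: because $\{\lvert X - E(X)\rvert \leq \alpha\sqrt{V(X)}\}$ contains the complement of $\{\lvert X - E(X)\rvert \geq \alpha\sqrt{V(X)}\}$, taking complements gives $\mathbb{P}\big(\lvert X - E(X)\rvert \leq \alpha\sqrt{V(X)}\big) \geq 1 - 1/\alpha^2$, which is the claim.

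There is no substantive obstacle here, as this is a textbook result; the only points requiring care are bookkeeping ones. I would make sure the hypotheses of finite and nonzero variance are used precisely where needed --- nonzero to guarantee the threshold $t$ is strictly positive so that Markov's inequality applies, and finite so that $E(Y) = V(X)$ is a genuine real number. The other mild subtlety is the passage between strict and non-strict inequalities when complementing events; since the stated bound uses $\leq$, and the complement of $\{\lvert X - E(X)\rvert \geq \alpha\sqrt{V(X)}\}$ is $\{\lvert X - E(X)\rvert < \alpha\sqrt{V(X)}\}$, I would simply note that the latter event is contained in $\{\lvert X - E(X)\rvert \leq \alpha\sqrt{V(X)}\}$, so the probability bound transfers in the correct direction.
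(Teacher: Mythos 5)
Your proof is correct and complete: it is the standard textbook derivation of the Bienaym\'e--Chebyshev inequality from Markov's inequality applied to $(X-E(X))^2$, with the hypotheses (finite, nonzero variance) and the strict/non-strict inequality bookkeeping handled properly. Note that the paper itself offers no proof of this lemma --- it is quoted as a classical result (just like the Azuma--Hoeffding inequality stated earlier in the same section) --- so your argument simply supplies the standard proof that the authors chose to omit, and there is no divergence of approach to report.
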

Regarding the two algorithms above, the computed $\widehat{y}$ satisfies $E(\widehat y)= y$, then
$$\mathbb{P}\left(
\lvert \widehat{y} - y \rvert \leq \alpha \sqrt{V(\widehat y)} 
\right) \geq 1-\frac{1}{\alpha^2},
$$
taking $\lambda = \frac{1}{\alpha^2}$ yields 
$\lvert \widehat{y} - y  \rvert \leq  \sqrt{V(\widehat y)/\lambda}$ with probability at least $1-\lambda$.
\subsubsection{Inner product}\label{sub-inner}

From Theorem \ref{innervar} we have 
$$ \frac{\sqrt{V\big( \widehat y) /\lambda}}{\lvert y \rvert} \leq \mathcal{K}_1 \sqrt{\gamma_{n}(u^2)/\lambda}.
$$
Thus,
$$ \frac{\lvert \widehat{y} - y  \rvert}{\lvert y  \rvert} \leq \frac{\sqrt{V\big( \widehat y)/\lambda}}{\lvert y \rvert} \leq \mathcal{K}_1 \sqrt{\gamma_{n}(u^2)/\lambda},
$$
and
\begin{equation}
    \mathbb{P} \left( \frac{\lvert \widehat{y} - y  \rvert}{\lvert y  \rvert}  \leq  \mathcal{K}_1 \sqrt{\gamma_{n}(u^2)/\lambda}\right) \geq  \mathbb{P} \left( \frac{\lvert \widehat{y} - y  \rvert}{\lvert y  \rvert}  \leq  \frac{\sqrt{V\big( \widehat y) /\lambda}}{\lvert y \rvert} \right) \geq 1-\lambda.
\end{equation}

\subsubsection{Horner algorithm}

From Theorem \ref{hor-var} we have 
$$ \frac{V\big( \widehat{P}(x) \big)}{\lvert P(x) \rvert} \leq \mathcal{K}_1 \sqrt{\gamma_{2n}(u^2)}.
$$
The previous reasoning from Sub-section~\ref{sub-inner} leads to
\begin{equation}
    \mathbb{P} \left( \frac{\lvert \widehat{P}(x) - P(x) \rvert}{\lvert P(x) \rvert}  \leq  \mathcal{K}_1 \sqrt{\gamma_{2n}(u^2)/\lambda} \right) \geq1-\lambda.
\end{equation}

\section{Bounds analysis}\label{bound-analyze}

In the following, we compare the various bounds of the two previous algorithms and analyze which bound is the tightest depending on the precision in use, the target probability, and the number of operations.
\subsection{Inner product}\label{bound-compare-inner}
In the beginning, let us recall all bounds obtained for the inner product $y=a^{\top}b,$ where $a,b\in\mathbb{R}^n$
\begin{align}
\frac{\lvert \widehat{y}  - y \rvert}{\lvert y \rvert} &\leq \mathcal{K}_1 \gamma_{n}(u), \quad  \label{det-inn-bound}\tag{Det-IP}\\
\frac{\lvert \widehat{y}  - y \rvert}{\lvert y \rvert}    &\leq \mathcal{K}_1 \tilde{\gamma}_n(\sqrt{2\ln{(2n/\lambda)}}) && \text{with probability at least $1-\lambda$,} \label{higham-inn-bound}\tag{AH1-IP} \\
\frac{\lvert \widehat{y}  - y \rvert}{\lvert y \rvert}    &\leq \mathcal{K}_1 \sqrt{u \gamma_{2n}(u)} \sqrt{\ln (2 / \lambda)} && \text{with probability at least $1-\lambda$}, \label{ilse-inn-bound}\tag{AH2-IP} \\
  \frac{\lvert \widehat{y}  - y \rvert}{\lvert y \rvert}   &\leq \mathcal{K}_1 \sqrt{ \gamma_{n}(u^2) } \sqrt{1/ \lambda} &&\text{with probability at least $1-\lambda$},  \label{cheb-inn-bound}\tag{BC-IP}
\end{align}
where $ \tilde{\gamma}_n(\sqrt{2\ln{(2n/\lambda)}}) = \exp{\left( \frac{\sqrt{2n \ln{(2n/\lambda)}} u + nu^2}{1-u}
\right)}-1$. 

All bounds have the same condition number $\mathcal{K}_1$, but differ in the others factor: $\gamma_{n}(u)$ for~(\ref{det-inn-bound}), $\tilde{\gamma}_n(\sqrt{2\ln{(2n/\lambda)}})$ for~(\ref{higham-inn-bound}), $ \sqrt{u \gamma_{2n}(u)} \sqrt{\ln (2 / \lambda)}$ for~(\ref{ilse-inn-bound}), and $\sqrt{ \gamma_{n}(u^2)} \sqrt{1/ \lambda}$ for~(\ref{cheb-inn-bound}). For a constant $\lambda$, we investigate two cases: $nu \ll 1$ and $nu \gg 1$.

For $n$ and $u$ such that $nu \ll 1$ we have $$ \exp{\frac{\sqrt{2n\ln{(2n/\lambda)}}u + nu^2}{1-u}} -1 = u\sqrt{2n\ln{(2n/\lambda)}}  + O(u^2).$$
Moreover,~\cite[Lemma 3.1]{higham2002} implies
$$
\gamma_{n}(u) \leq \frac{nu}{1-nu},
$$
it follows that for $2nu <1$,
\begin{align*}
\sqrt{u \gamma_{2n}(u)} &\leq \sqrt{ \frac{2nu^2}{1-2nu}} = u \sqrt{n} \sqrt{\frac{2}{1-2nu}},
\end{align*}
and for $nu^2 <1$
\begin{align*}
\sqrt{\gamma_{n}(u^2)} &\leq \sqrt{ \frac{nu^2}{1-nu^2}} = u \sqrt{n} \frac{1}{ \sqrt{ 1-nu^2}}.
\end{align*}

Interestingly, for the inner product, at any fixed probability, when $nu \ll 1$,~(\ref{ilse-inn-bound}) and~(\ref{cheb-inn-bound}) bounds are proportional to $\sqrt{n} u$ unlike $\sqrt{n\ln{n}}u$ for the~(\ref{higham-inn-bound}) bound. Note that the deterministic bound is in $O(nu)$.

For $n$ and $u$ such that $nu \gg 1$ and $nu^2 \ll 1$, we have
\begin{align*}
  \exp{\frac{\sqrt{2n\ln{(2n/\lambda)}}u + nu^2}{1-u}} -1 &\approx \exp{\frac{\sqrt{2n\ln{(2n/\lambda)}}u + nu^2}{1-u}}\\
    &\approx \exp{(\sqrt{n\ln{(n)}}u)},
\end{align*}
then
\begin{equation}\label{higham}
\tilde{\gamma}_n(\sqrt{2\ln{(2n/\lambda)}}) \approx \exp{(\sqrt{n\ln{(n)}}u)}.
\end{equation}
Furthermore 
\begin{equation}\label{ipsen}
\sqrt{u\gamma_{2n}(u)} \approx \sqrt{u\exp{(2nu)}-1} \approx \sqrt{u} \exp{(nu)},
\end{equation}
and
\begin{equation}\label{mehdi}
    \sqrt{\gamma_{n}(u^2)} \approx \sqrt{\exp{(nu^2)}-1} \approx \sqrt{n} u + O(u^2).
\end{equation}

Therefore,~(\ref{higham}),~(\ref{ipsen}) and~(\ref{mehdi}) show that~(\ref{cheb-inn-bound}) $\leq $~(\ref{higham-inn-bound}) $\leq $~(\ref{ilse-inn-bound}) when $nu \gg 1$ and $nu^2 \ll 1$.

\subsection{Horner algorithm}
Let us recall all bounds obtained for the Horner algorithm
\begin{align}
\frac{\lvert \widehat{P}(x)  - P(x) \rvert}{\lvert P(x) \rvert} &\leq \mathcal{K}_1 \gamma_{2n}(u),  \quad \label{det-hor-bound}\tag{Det-H}\\
\frac{\lvert \widehat{P}(x)  - P(x) \rvert}{\lvert P(x) \rvert}    &\leq \mathcal{K}_1 \sqrt{u \gamma_{4n}(u)} \sqrt{\ln \frac2\lambda} && \text{with probability $\geq1-\lambda$},  \label{azum-hor-bound}\tag{AH-H}\\ 
\frac{\lvert \widehat{P}(x)  - P(x) \rvert}{\lvert P(x) \rvert}   &\leq \mathcal{K}_1 \sqrt{ \gamma_{2n}(u^2) } \sqrt{\frac1 \lambda} && \text{with probability $\geq1-\lambda$}. \label{cheb-hor-bound}\tag{BC-H}
\end{align}
Similar reasoning to Section~\ref{bound-compare-inner} shows that the probabilistic bounds for the Horner algorithm forward error are in $O(\sqrt{n} u)$ versus $O(nu)$ for the deterministic bound. With the Horner method, the degree of the polynomial, $n$, is seldom very large in practice. 

In conclusion, these probabilistic approaches show that the roundoff error accumulated in $n$ operations is proportional to $\sqrt{n}u$ rather than $nu$. In the next section, we analyze these two probabilistic methods.

\subsection{Bienaymé–Chebyshev vs Azuma-Hoeffding}\label{cheb-vs-azum}
In the following, we compare the three probabilistic bounds~(\ref{higham-inn-bound}), ~(\ref{ilse-inn-bound}) and~(\ref{cheb-inn-bound}) on the inner product forward error (similar reasoning can be applied to the Horner algorithm with the same result). When $nu \ll 1$, at any fixed probability,~(\ref{ilse-inn-bound}) and~(\ref{cheb-inn-bound}) are proportional $O(\sqrt{n} u)$. First, we focus on this case.
The two probabilistic bounds have the same condition number $\mathcal{K}_1$. 
 Thus, it is enough to compare $\sqrt{\frac{u}{2} \gamma_{2n}(u)} \sqrt{2\ln (2 / \lambda)}$ and $\sqrt{ \gamma_{n}(u^2) } \sqrt{1/ \lambda}$. These two bounds depend on $n$ and $\lambda$. Firstly, using the binomial theorem, we have
\begin{align*}
    \frac{u}{2} \gamma_{2n}(u) - \gamma_{n}(u^2) 
    &= \frac{u}{2} \sum_{k=1}^n \dbinom{n}{k} (u^2+2u)^{k} - \sum_{k=1}^n \dbinom{n}{k} (u^2)^{k}\\
    &\geq \sum_{k=1}^2 \dbinom{n}{k} \left[\frac{u}{2}(u^2+2u)^k -(u^2)^k\right]
    \geq n(n-\frac12)u^3.
\end{align*}
We can conclude that
\begin{equation}\label{inequality}
    \sqrt{ \gamma_{n}(u^2) } \leq \sqrt{\frac{u}{2} \gamma_{2n}(u)} \ \text{for all} \ n\geq 1.
\end{equation}
Now, let us compare $\sqrt{1/\lambda}$ and $\sqrt{2\ln(2/\lambda)}$ for $\lambda \in]0;1[$, 
\begin{figure}
\centering
\begin{minipage}[t]{.48\textwidth}
  \centering
  \includegraphics[width=.99\linewidth]{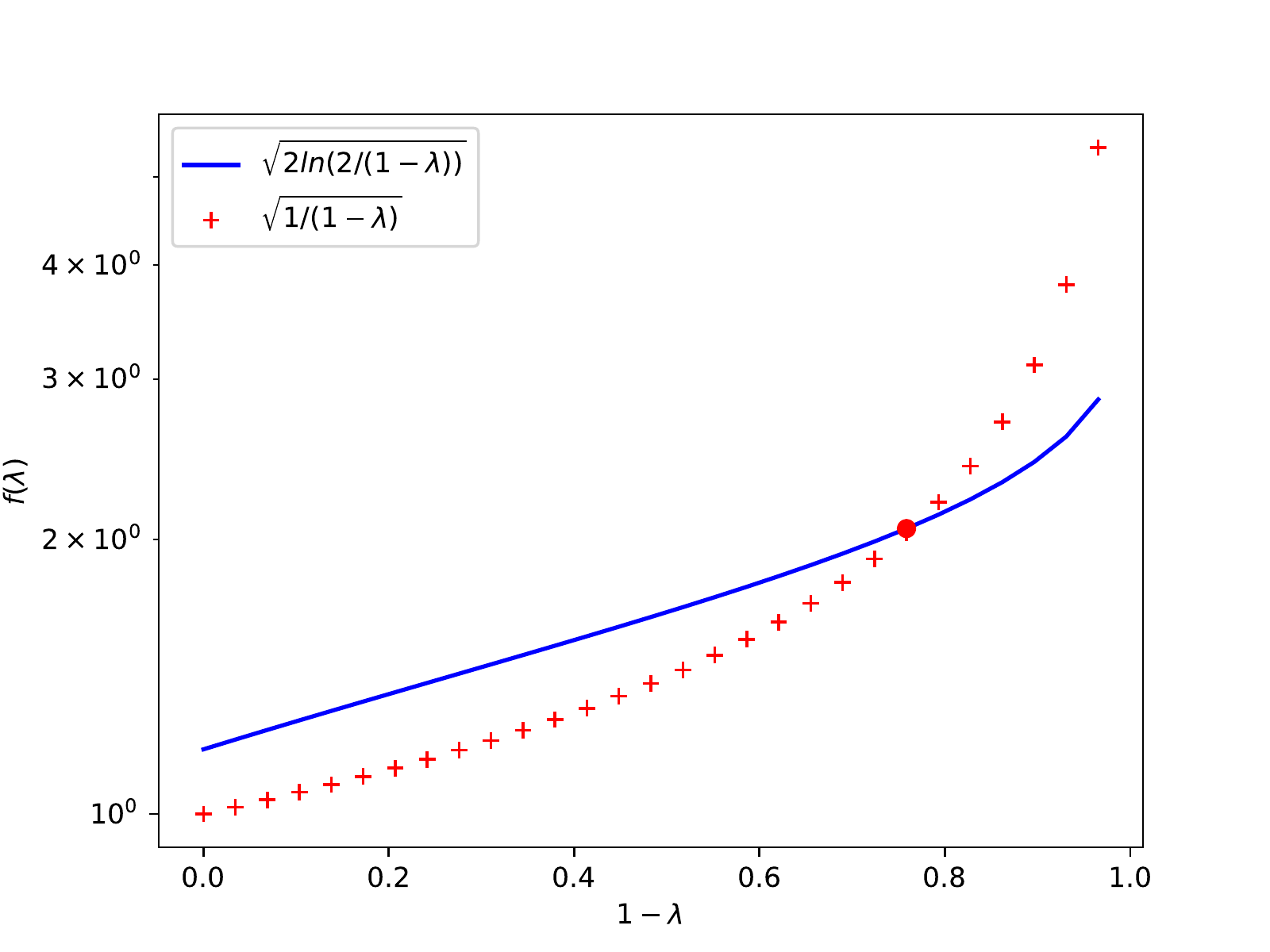}
    \caption{Illustration of $\sqrt{1/\lambda}$ and $\sqrt{2\ln(2/\lambda)}$ behaviour for all $\lambda \in]0;1[$.}
    \label{compare lambda}
\end{minipage}%
\hfill
\begin{minipage}[t]{.48\textwidth}
  \centering
  \includegraphics[width=.99\linewidth]{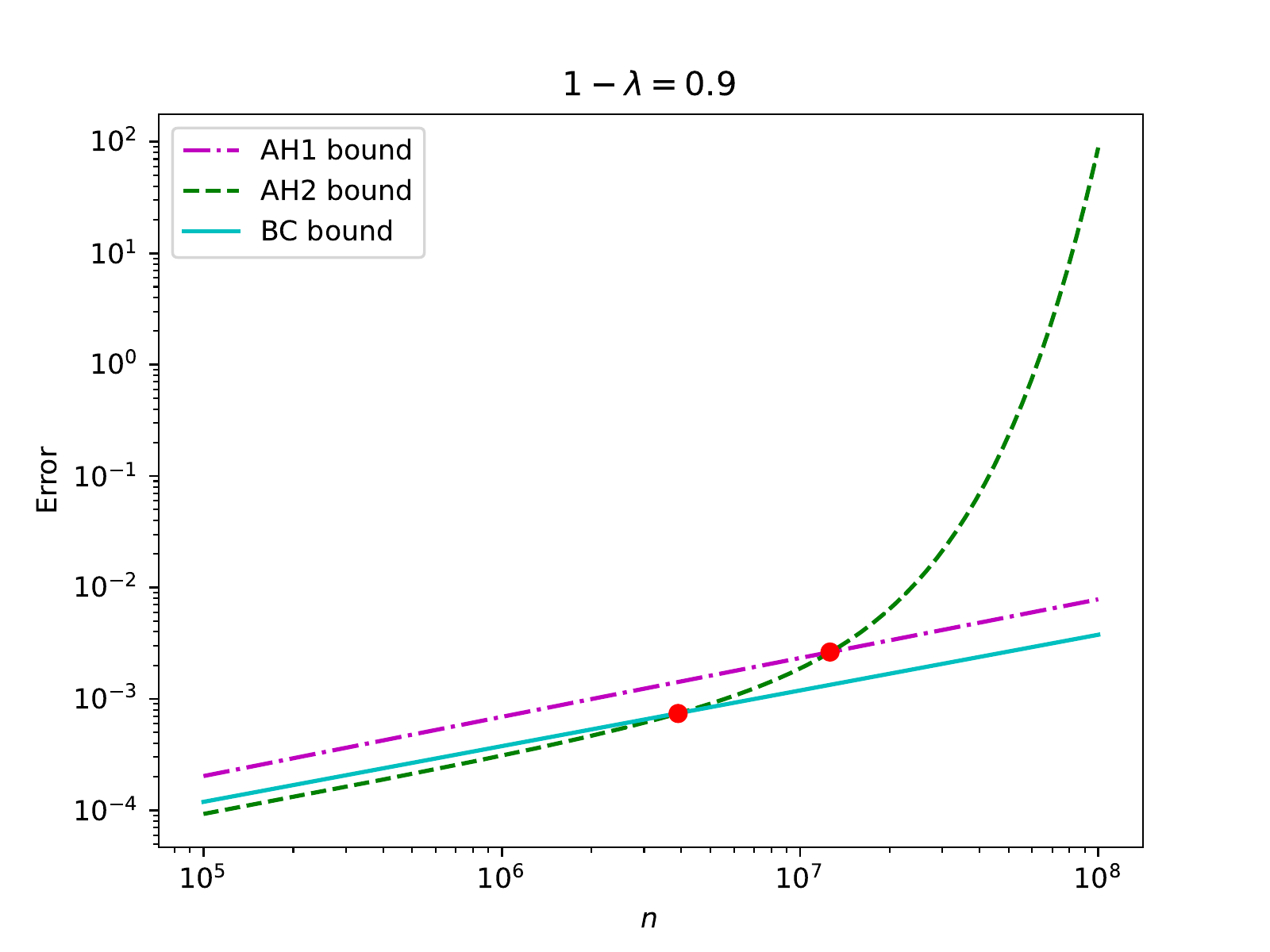}
    \caption{AH1, AH2 and BC bounds with probability $0.9$ and $u=2^{-23}$ for the inner product.}
    \label{fig:bc-ah1-ah2}
\end{minipage}
\end{figure}

Figure~\ref{compare lambda} and the inequality~(\ref{inequality}) show that whatever the problem size $n$ and for a probability at most $\approx 0.758$, the BC method gives a tighter probabilistic bound than the AH2 method.

Figure~\ref{fig:bc-ah1-ah2} confirms the discussion of Section~\ref{bound-compare-inner} and it shows that with a probability $0.9$, when $nu \gg 1$, AH2 bound grows rapidly compared to AH1 and BC bounds. Regarding BC bound, the variance calculation and the mean independence allow to bound the error terms $(1+\delta)^2$ by $(1+u^2)$ and avoid all $\delta$ terms of degree one because $E(\delta)=0$. In contrast, the AH1 and AH2 methods require bounded increments leading to terms $(1+u)^2$. As $n$ increases, the advantage of Azuma-Hoeffding inequality for a probability near $1$ becomes negligible. 

For all asymptotic comparisons between the bounds in this paper, we have chosen to work with $u\rightarrow0$, $n\rightarrow\infty$ and fixed probability $\lambda$, which we think adapted to many if not most current practical use cases. A situation with $\lambda\rightarrow0$ and fixed $n$ gives the advantage to the Azuma-Hoeffding bounds over the Bienaymé-Chebyshev one.

\begin{table}
   {\renewcommand{\arraystretch}{1.1}

  \centering

    \begin{tabular}{|C{2.5cm}|C{2.5cm}|C{3.5cm}|C{2.5cm}|}
\hline Probability   & $u$ & Precision format & $n \gtrsim $  \\

\hline $1- \lambda = 0.95$  & $2^{-7}$ & bfloat16 & $110$  \\
\cline{2-4} 
& $2^{-10}$ & binary16 & $890$  \\
\cline{2-4} 
& $2^{-23}$ & binary32 & $7.3~\text{e}06$  \\
\cline{2-4} 
& $2^{-52}$ & binary64 & $3.9~\text{e}15$\\
\hline $1- \lambda = 0.99$  & $2^{-7}$ & bfloat16 & $220$  \\
\cline{2-4}
& $2^{-10}$ & binary16 & $1810$\\
\cline{2-4} 
& $2^{-23}$ & binary32 & $1.48~\text{e}07$\\
\cline{2-4} 
& $2^{-52}$ & binary64 & $7.9~\text{e}15$\\
\hline
\end{tabular}                                 
\caption{The smallest $n$ such that BC method gives a tighter probabilistic bound than AH2 method for the inner product.}
    \label{table-n}
    }
\end{table}

Table~\ref{table-n} illustrates how BC is tighter than AH2 when $n$ grows. The $n$ threshold above which BC is preferable to AH2 bound depends on the format precision. The lower the precision, the lower the threshold becomes. Using SR in low precision is of high interest in the areas of machine learning~\cite{gupta}, PDEs~\cite{pde}, and ODEs~\cite{ode}, motivating the use of our improved BC method. 

\section{Numerical experiments}\label{sec:exp}
This section presents numerical experiments that support and complete the theory presented previously. The various bounds are compared on two numerical applications: the inner product and the evaluation of the Chebyshev polynomial. 

We show that the probabilistic bounds are tighter than the deterministic bound and faithfully capture the behavior of SR-nearness forward error. For an inner product of large vectors, we show that BC bound is smaller than AH1 and AH2 bounds. All SR computations are repeated $30$ times with verificarlo~\cite{verificarlo}; we plot all samples and the forward error of the average of the 30 SR instances.

\subsection{Horner algorithm} 
We use Horner’s method to evaluate the polynomial $P(x)=T_{N}(x) = \sum_{i=0}^{\llfloor \frac{N}{2} \rrfloor} a_i (x^2)^i$ where $T_{N}$ is the Chebyshev polynomial of even degree $N=2n$. The previous error bounds, ~(\ref{det-hor-bound}),~(\ref{azum-hor-bound}), and~(\ref{cheb-hor-bound}) apply to this computation.

\begin{figure}
    \centering
    \subfloat{
         \hspace{-.6cm}\includegraphics[scale=0.41]{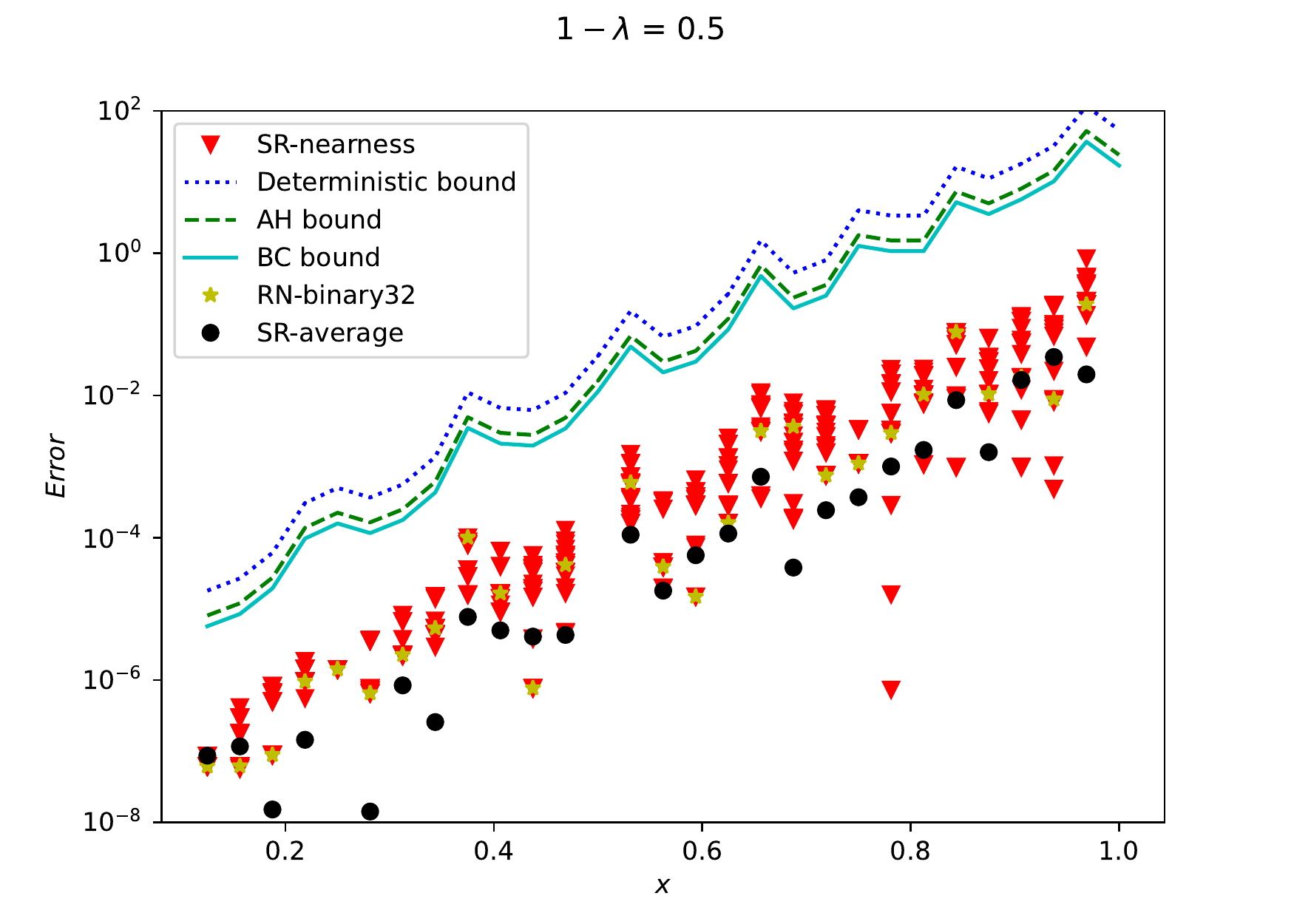}
} \subfloat{
        \hspace{-0.8cm}\includegraphics[scale=0.41]{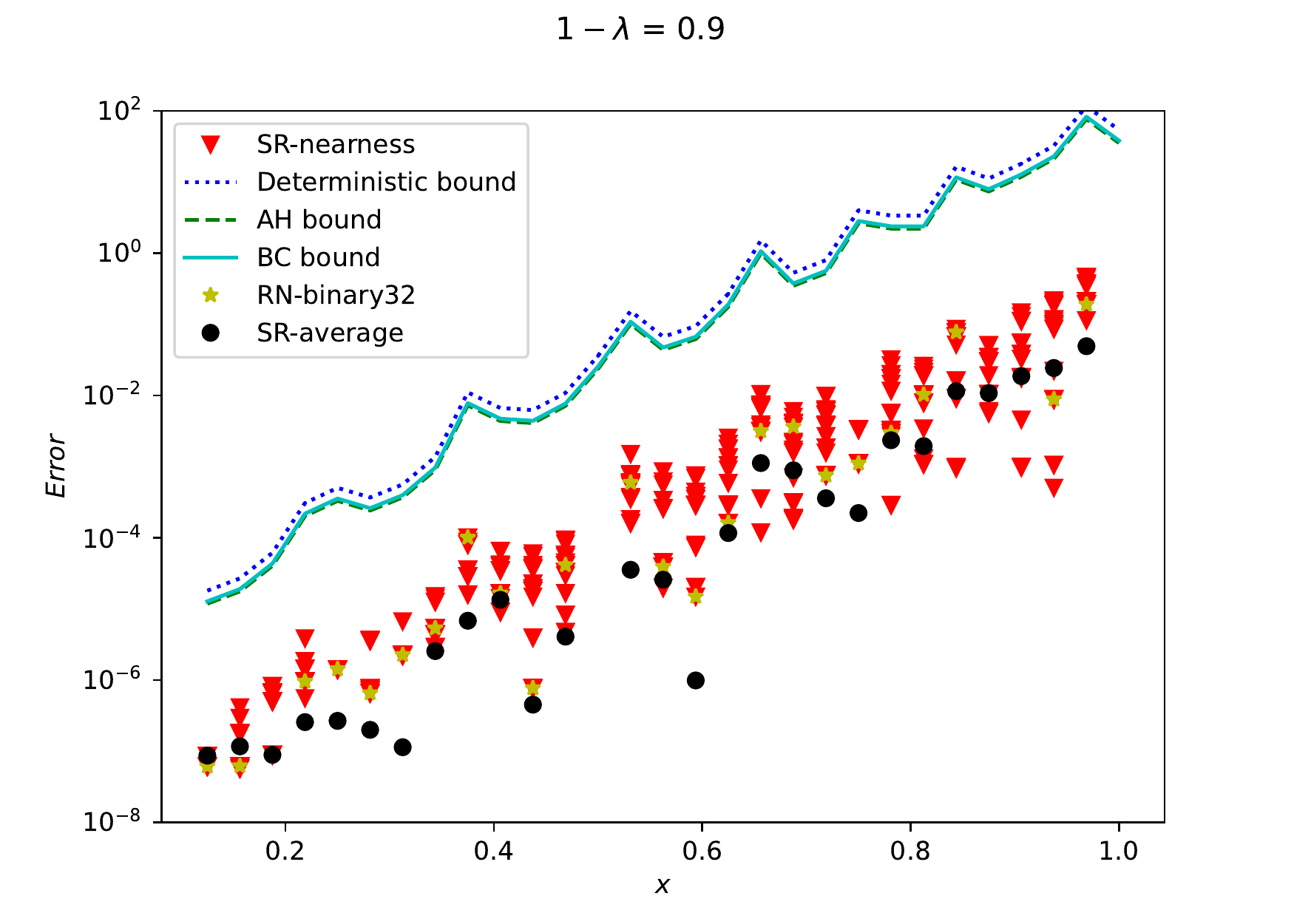}\hspace{-0.8cm}
}
\caption{Probabilistic error bounds with probability $1- \lambda =0.5$ (left) and $1- \lambda =0.9$ (right) vs deterministic bound for the Horner's evaluation of $T_{20}(x)$ and $u=2^{-23}$.
Triangles mark $30$ instances of the SR-nearness relative errors evaluation in binary32 precision, a circle marks the relative errors of the $30$ instances average, and a star represents the IEEE RN-binary32 value.  }
\label{fig:n=20}
\end{figure}

Chebyshev polynomial is ill-conditioned near $1$ as shown in Figure~\ref{fig:n=20}, which evaluates $T_{20}(x)$ for $x\in[\frac{8}{64};1]$. Due to catastrophic cancellations among the polynomial terms, the condition number increases from $10^0$ to $10^7$ in the chosen $x$ interval, resulting in an increasing numerical error for both RN-binary32 and SR-nearness computations.

The left plot confirms that the Bienaymé–Chebyshev bound~(\ref{cheb-hor-bound}) is more accurate than the Azuma-Hoeffding bound~(\ref{azum-hor-bound}) for probability $1 - \lambda = 0.5$. With a higher probability $1 - \lambda = 0.9$ (right plot), since $N=20$ and $u=2^{-23}$ Azuma-Hoeffding bound~(\ref{azum-hor-bound}) is tighter, as predicted in Figure~\ref{fig:bc-ah1-ah2}. Both probabilistic bounds are tighter than the deterministic bound. 
For $N=20$, there is no significant difference between SR-nearness and RN-binary32. However, as expected, the average of the SR-nearness computations is more precise than the nearest round evaluation for almost all values of $x$.

\begin{figure}
    \centering
    \subfloat{
         \hspace{-.6cm}\includegraphics[scale=0.44]{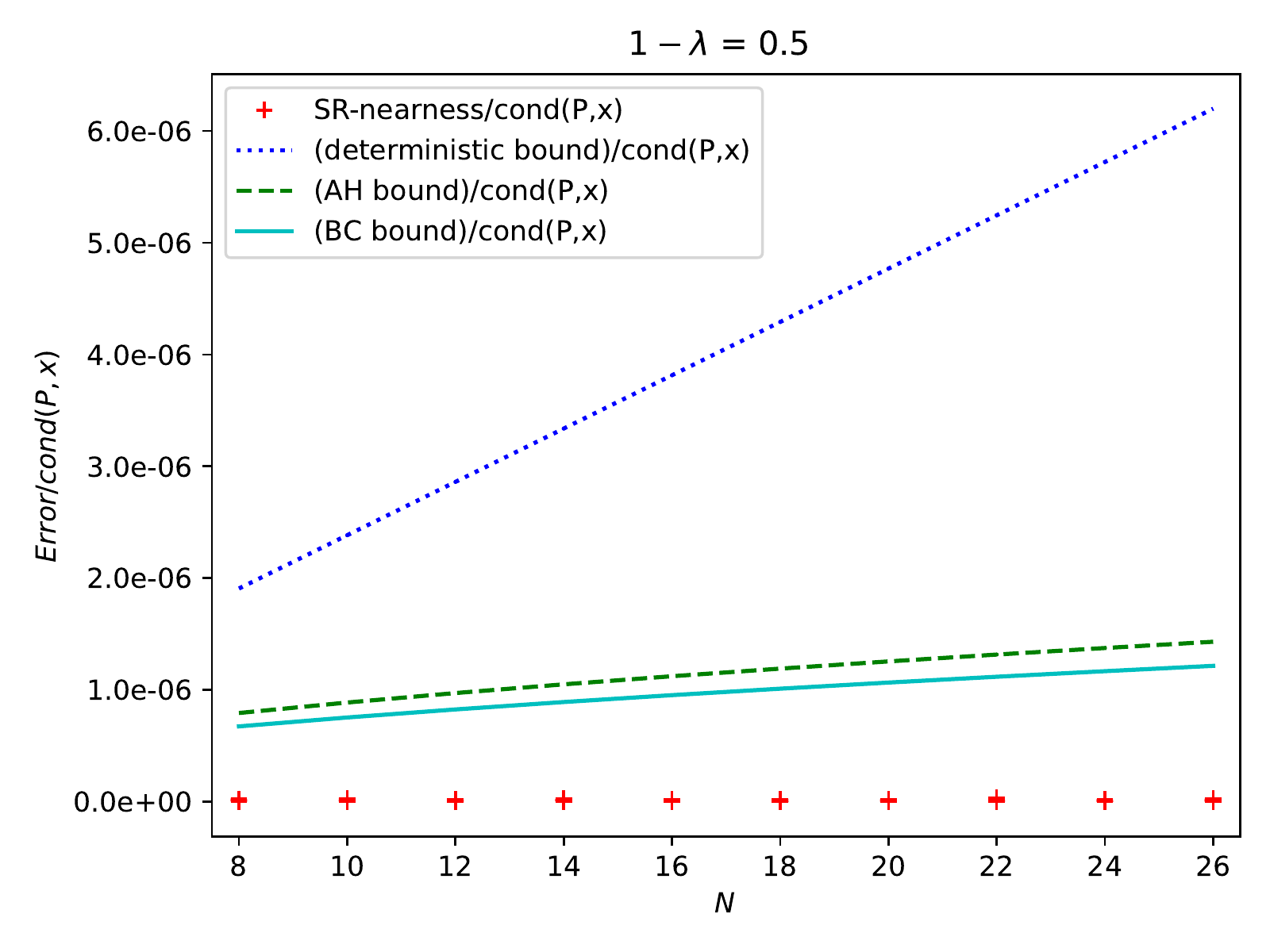}
} \subfloat{
        \hspace{-0.3cm}\includegraphics[scale=0.44]{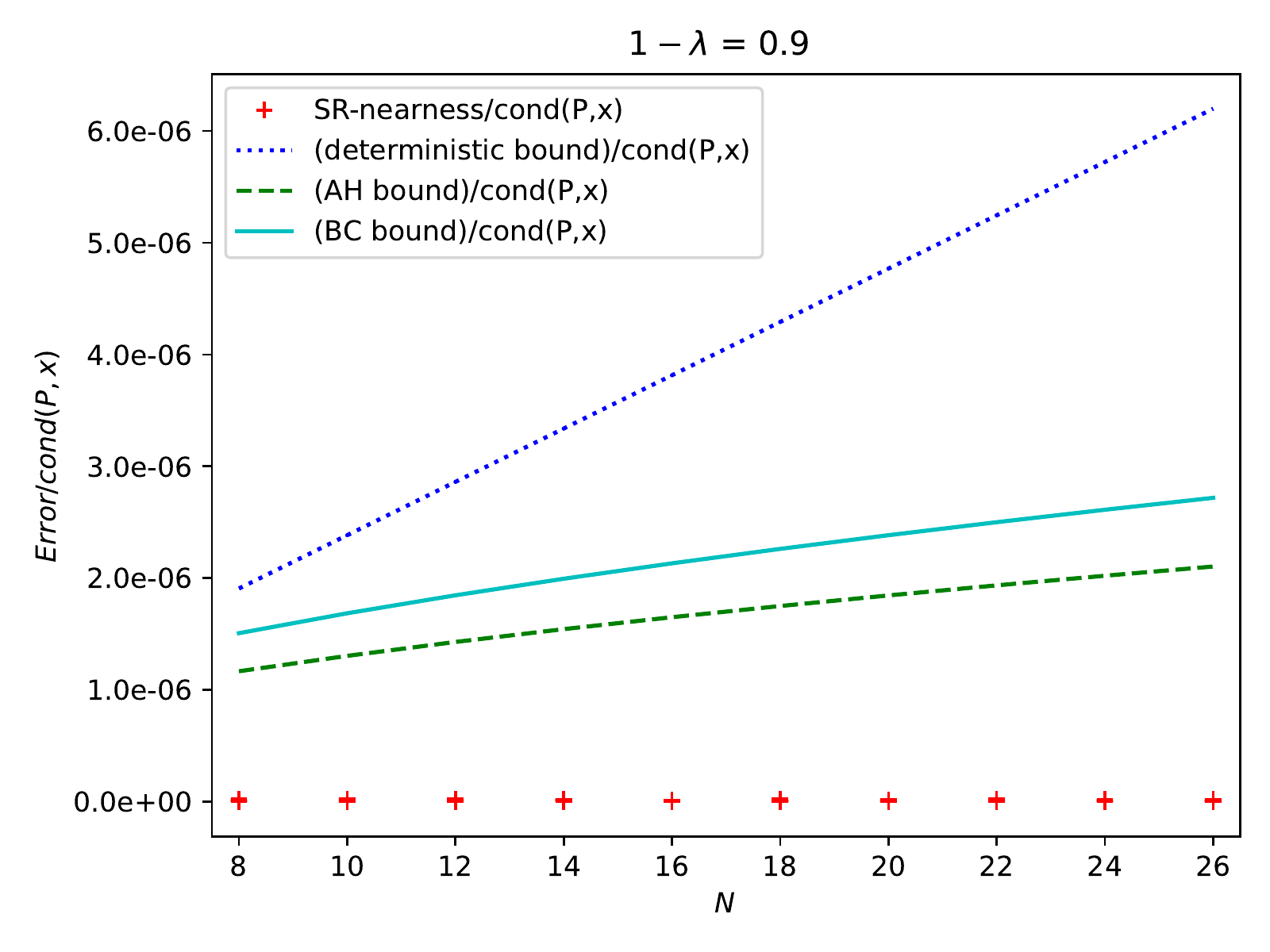}\hspace{-0.9cm}
}
\caption{Normalized forward error ($\text{error}/cond(P,x)$) with probability $1- \lambda =0.5$ (left) and $1- \lambda =0.9$ (right) for Horner's evaluation of $T_{N}(24/26)$. 
}
    \label{fig:x=24/26}
\end{figure}

In Figure \ref{fig:x=24/26}, the three previous bounds and the forward error are normalized by the condition number $cond(P,x)$. The evaluation in  $x=24/26\approx0.923$ is plotted for various polynomial degrees $N.$ As expected, when $N$ increases, the deterministic bound grows faster than the probabilistic bounds. The right plot shows that Azuma-Hoeffding bound is tighter for a high probability and a small $n$. 
Overall, Chebyshev polynomial numerical experiment illustrates the advantage of the probabilistic error bounds over the deterministic error bound.
However, for most of the evaluations in this experiment, RN-binary32 is more accurate than one instance of SR-nearness. This result is unsurprising because the degree $n$ is small. To illustrate the behavior of these errors with a large $n$, we now turn to the inner product.

\subsection{Inner product}
To showcase the advantage of using BC method for large $n$, we present a numerical application of the inner product for vectors with positive floating-points chosen uniformly at random between $0$ and $1$. 

\begin{figure}
    \centering
   \includegraphics[scale=0.4]{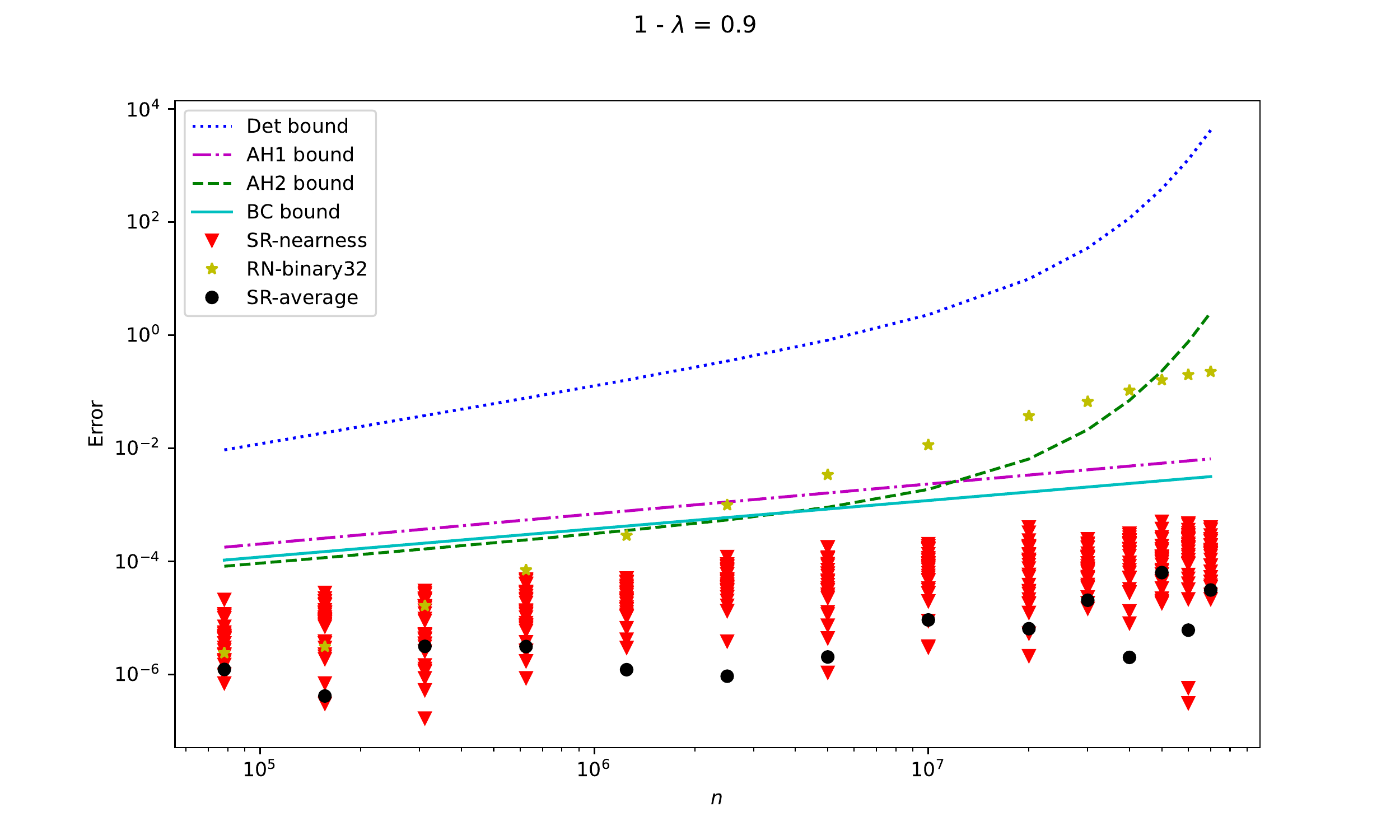}
    \caption{Probabilistic bounds with probability $1- \lambda =0.9$ vs deterministic bound of the computed forward errors of the inner product of with $u=2^{-23}$.}
    \label{fig:inner}
\end{figure}

For small $n$, AH1, AH2, and BC bounds are comparable with a slight advantage for~(\ref{ilse-inn-bound}).
However, as shown in Section~\ref{bound-compare-inner}, when $nu \gg 1$, the AH2 bound grows exponentially faster than AH1 and BC bounds. Asymptotically, the AH1 and BC bounds are therefore much tighter.

Interestingly, when $n$ increases, a single instance of SR-nearness in binary32 precision is more accurate than RN-binary32. This is because the summation terms are chosen uniformly between 0 and 1. The terms closest to zero are absorbed. With RN-binary32 the absorption errors are biased and will add up, while SR avoids stagnation and mitigates absorption errors.
If we choose the terms in $[-1; 1]$, SR and RN-binary32 have the same behavior. In this case, the absorption errors for RN-binary32 compensate because positive and negative errors are  uniformly distributed. If we choose the terms in $[1/2; 1]$, no absorption occurs for $n < 2^{23}$, and on this domain, SR and RN-binary32 behave similarly.

\begin{figure}
    \centering
   \includegraphics[scale=0.36]{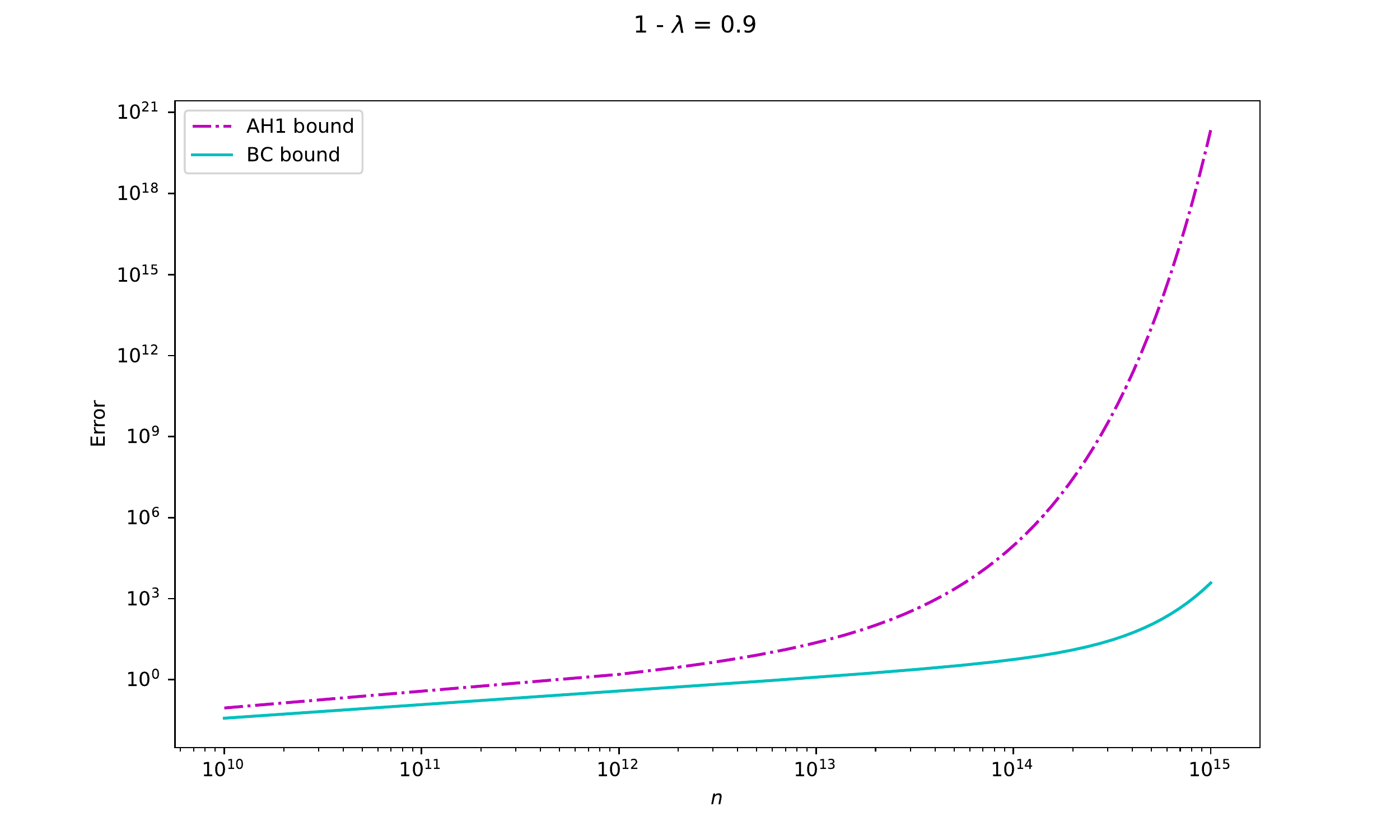}
    \caption{AH1 bound vs BC bound with probability $1- \lambda =0.9$ and $u=2^{-23}$ for the inner product of with.}
    \label{fig:bc-ah1}
\end{figure}

Figure~\ref{fig:bc-ah1} illustrates the advantage of using~(\ref{cheb-inn-bound}) and shows that for a large $n \geq  10^{13}$ and $u=2^{-23}$, the AH1 bound increases faster than the BC bound.

For large vectors, using stochastic rounding instead of the default round to nearest improves the computation accuracy of the inner product. 
However, this experiment raises concerns regarding the use of SR as a model to estimate RN rounding errors~\cite{verificarlo,sohier2021confidence}, in particular for a large number of operations. Further studies are required to assess precisely the limits of this model and possibly give criteria to detect them. 

\section{Conclusion}

For a wide field of applications, SR results in a smaller accumulated error, for example by avoiding stagnation effects. Moreover, SR errors satisfy the mean independence property allowing to derive tight probabilistic error bounds from either our variance bound or the martingale property.

For an inner product $y=a^{\top}b,$ Sub-section~\ref{azuma-method} compares the benefits of constructing the martingale from the recursive summation of the inner product~\cite{ilse} versus the construction from the errors accumulated in the whole process at each product $a_ib_i$~\cite{theo21stocha}. In particular, with a fixed probability, the construction in~\cite{ilse} gives a $O(\sqrt{n}u)$ probabilistic bound, tighter than the $O\big(u\sqrt{n\ln{(n)}} \big)$ bound in~\cite{theo21stocha} when $nu \ll 1$. Nevertheless, when $nu \gg 1$, Figure~\ref{fig:inner} shows that the~(\ref{ilse-inn-bound}) bound increases faster than~(\ref{higham-inn-bound}) bound.

An extension of the method in~\cite{ilse} to the Horner algorithm is presented.
Unlike the inner product, Horner algorithm does not explicitly satisfy the martingale property on the partial sums requiring a change of variable before one can use the Azuma-Hoeffding inequality.

Lemma~\ref{model} is a variance bound for the family of algorithms whose error can be written as a product of error terms of the form $1+ \delta$. Based on the Bienaymé–Chebyshev inequality, a new method is proposed to obtain probabilistic error bounds. This method allows to get tighter probabilistic error bound in various situations, such as computations with a large $n$. We demonstrate the strength of this new approach on two algorithms: the inner product which has been previously studied, and Horner polynomial evaluation, for which no SR results were known beforehand.

The scripts for reproducing the numerical experiments in this paper are published in the repository \url{https://github.com/verificarlo/sr-variance-bounds/}.

\section*{Acknowledgment}
We thank the anonymous reviewers for their insightful comments that have substantially improved the paper.

\bibliographystyle{siamplain}
\bibliography{references}

\end{document}